\newcommand{\N}{\mathbb N}
\newcommand{\gt}{\gamma_{t}}
 \renewcommand{\headrulewidth}{0pt}
 \renewcommand{\footrulewidth}{0.5pt}
 \definecolor{myaqua}{rgb}{0.0,0.5,0.55}
 \definecolor{lightaqua}{rgb}{0.75,0.95,0.95}
\newtheorem{theorem}{Theorem}
\newtheorem{lem}{Lemma}
\newtheorem{coro}{Corollary}
\newtheorem{defn}{Definition}[section]
\newtheorem{rem}{Remark}[section]
\def\lin#1#2{\textcolor[rgb]{0.6,0.6,0.6}{\vspace*{#1mm} \hrule
   height 3 pt \vspace*{#2mm}}}
\def\bt{\begin{tabular}}
\def\et{\end{tabular}}
\def\and{\mbox{ and }}
\def\P{\mbox{\bf P}}
\def\1{{\bf 1}}
 \def\boxx#1#2#3#4#5{
 {\linethickness{#4pt}\put(#1,#5){\color{myaqua}{\line(1,0){#3}}}}
 \multiput(#1,#2)(0,#4){2}{\line(1,0){#3}}
 \multiput(#1,#2)(#3,0){2}{\line(0,1){#4}}
  }
\begin{document}


 $\mbox{ }$

 \vskip 12mm

{ 

{\noindent{\Large\bf\color{myaqua}
  Characterization of a new class of stochastic processes including all known extensions of the class \texorpdfstring{$(\Sigma)$}{sigma}} 
%
\\[6mm]
{\bf Fulgence EYI OBIANG$^{1,a}$, Paule Joyce MBENANGOYE$^{1,b}$ and Octave MOUTSINGA$^{1,c}$}}
\\[2mm]
{ 
$^1$URMI Laboratory, Département de Mathématiques et Informatique, Faculté des Sciences, Université des Sciences et Techniques de Masuku, BP: 943 Franceville, Gabon. 
\\
$^a$ Email: \href{mailto:feyiobiang@yahoo.fr}{\color{blue}{\underline{\smash{feyiobiang@yahoo.fr}}}}\\[1mm]
$^b$ Email: \href{mailto:paulejoycembenangoye@yahoo.fr}{\color{blue}{\underline{\smash{paulejoycembenangoye@yahoo.fr}}}}\\[1mm]
$^c$ Email: \href{mailto:octavemoutsing-pro@yahoo.fr}{\color{blue}{\underline{\smash{octavemoutsing-pro@yahoo.fr}}}}\\[1mm]
\lin{5}{7}

 {  
 {\noindent{\large\bf\color{myaqua} Abstract}{\bf \\[3mm]
 \textup{
 This paper contributes to the study of class $(\Sigma^{r})$ as well as the càdlàg semi-martingales of class $(\Sigma)$, whose finite variational part is càdlàg instead of continuous. The two above-mentioned classes of stochastic processes are extensions of the family of càdlàg semi-martingales of class $(\Sigma)$ considered by Nikeghbali \cite{nik} and Cheridito et al. \cite{pat}; i.e., they are processes of the class $(\Sigma)$, whose finite variational part is continuous. The two main contributions of this paper are as follows. First, we present a new characterization result for the stochastic processes of class $(\Sigma^{r})$. More precisely, we extend a known characterization result that Nikeghbali established for the non-negative sub-martingales of class $(\Sigma)$, whose finite variational part is continuous (see Theorem 2.4 of \cite{nik}). Second, we provide a framework for unifying the studies of classes $(\Sigma)$ and $(\Sigma^{r})$. More precisely, we define and study a new larger class that we call class $(\Sigma^{g})$. In particular, we establish two characterization results for the stochastic processes of the said class. The first one characterizes all the elements of class $(\Sigma^{g})$. Hence, we derive two corollaries based on this result, which provides new ways to characterize classes $(\Sigma)$ and $(\Sigma^{r})$. The second characterization result is, at the same time, an extension of the above mentioned characterization result for class $(\Sigma^{r})$ and of a known characterization result of class $(\Sigma)$ (see Theorem 2 of \cite{fjo}). In addition, we explore and extend the general properties obtained for classes $(\Sigma)$ and $(\Sigma^{r})$ in \cite{nik,pat,mult, Akdim}. For instance, we study the positive and negative parts of the processes of class $(\Sigma^{g})$. We show that the product of the processes of class $(\Sigma^{g})$ with the vanishing quadratic covariation also belongs to class $(\Sigma^{g})$. Further, we show that every positive process $X$ of class $(\Sigma^{g})$ admits a multiplicative decomposition. In other words, it can be decomposed as
$$X=CW-1,$$
where $W$ is a positive local martingale with $W_{0}=1$, and $C$ is a non-decreasing process. This result is an extension of that obtained by Eyi Obiang et al. for positive non-negative càdlàg processes of class $(\Sigma)$ \cite{fjo}. We also present a result that enables the recovery of any process of class $(\Sigma^{g})$ from its final value $X_{\infty}$ and of an honest time $g$, which is the last time $(X_{t}:t\geq0)$ or $(X_{t-}:t\geq0)$ visited the origin. More precisely, this formula has the following form:
$$X_{t}=E\left[X_{\infty}1_{\{g\leq t\}}|\mathcal{F}_{t}\right],$$ 
where $X$ is the process of the class $(\Sigma^{g})$,$X_{\infty}=\lim_{t\to+\infty}{X_{t}}$, and $g=\sup\{t\geq0:X_{t}X{t-}=0\}$. 
 }}
 \\[4mm]
 {\noindent{\large\bf\color{myaqua} Keywords:}{\bf \\[3mm]
 class $(\Sigma)$; class $(\Sigma{r})$; Balayage formula; Honest time; Relative martingales.
}}\\[4mm]{\noindent{\large\bf\color{myaqua} MSC:}{\color{blue} 60G07; 60G20; 60G46; 60G48}}
\lin{3}{1}

\renewcommand{\headrulewidth}{0.5pt}
\renewcommand{\footrulewidth}{0pt}

 \pagestyle{fancy}
 \fancyfoot{}
 \fancyhead{} 
 \fancyhf{}
 \fancyhead[RO]{\leavevmode \put(-140,0){\color{myaqua} Fulgence EYI OBIANG et al. (2021)} \boxx{15}{-10}{10}{50}{15} }
 \fancyfoot[C]{\leavevmode
 \put(-2.5,-3){\color{myaqua}\thepage}}

 \renewcommand{\headrule}{\hbox to\headwidth{\color{myaqua}\leaders\hrule height \headrulewidth\hfill}}
\section*{Introduction}

This study investigates càdlàg semi-martingales of classes $(\Sigma)$ and $(\Sigma^{r})$. These are stochastic processes $X$ of the following form:
\begin{equation}\label{i1}
	X=M+A,
\end{equation}
where $M$ is a càdlàg local martingale with $M_{0}=0$ and $A$ is an adapted predictable process of finite variation with $A_{0}=0$, such that the signed measure induced by $A$ is carried by an optional random set $H$, where
\begin{equation}\label{i2}
	\int_{0}^{t}{1_{H^{c}}(s)dA_{s}}=0\text{,  }\forall t\geq0.
\end{equation}
Such processes are strongly related to many probabilistic studies. Well-known examples of studies where the use of such processes is capitalized include the theory of Azéma--Yor martingales, the study of zeros of continuous martingales \cite{1}, the study of Brownian local times, the balayage formulas in the progressive case \cite{mey}, the construction of solutions for skew Brownian motion equations \cite{fjo}, and the resolution of Skorokhod's reflection equation and embedding problem \cite{2}. These classes are represented in the form of $H$. More precisely, for the processes of class $(\Sigma)$, we have 
$$H=\{t\geq0:X_{t}=0\};$$
By contrast, for class $(\Sigma^{r})$, the random set $H$ takes the following form
$$H=\{t\geq0:X_{t-}=0\}.$$
The stochastic processes of class $(\Sigma)$, whose finite variational part is continuous, have been studied extensively by several authors,
 including Yor, Najnudel, Nikeghbali, Cheridito, Platen, Ouknine, Bouhadou, Sakrani, Eyi Obiang, Moutsinga, and Trutnau  (see \cite{siam,pat,eomt,fjo,naj,naj1,naj2,naj3,nik,mult,y1}). These authors studied the main properties of these processes, presented their applications, and relaxed the original hypotheses. The notion of stochastic processes of class $(\Sigma)$  has evolved over time, and the present study considers the most general definition presented by Eyi Obiang et al. in \cite{fjo}, which extends the notion of class $(\Sigma)$ to càdlàg semi-martingales, whose finite variational part is considered càdlàg instead of continuous. We consider the following definition:
\begin{defn}\label{di1}
We say that a semi-martingale $X$ is of class $(\Sigma)$ if it decomposes as $X=M+A$, where
\begin{enumerate}
	\item $M$ is a càdlàg local martingale, with $M_{0}=0$;
	\item $A$ is an adapted càdlàg predictable process with finite variations such that $A_{0-}=A_{0}=0$; 
	\item  $\int_{0}^{t}{1_{\{X_{s}\neq0\}}dA_{s}}=0$ for all $t\geq0$.
\end{enumerate}
\end{defn}

By contrast, the study of class $(\Sigma^{r})$ is quite recent. In 2018, Akdim et al. \cite{Akdim}  first characterized and studied the structural properties of the positive submartingales of the said class. However, it should be noted that the use of the processes of class $(\Sigma^{r})$ has a longer history. For instance, in 1981, Barlow \cite{bar} used these processes to show that any positive submartingale is equal to the absolute value of a martingale. More precisely, we consider the following definition:
\begin{defn}\label{di2}
We say that a semi-martingale $X$ is of class $(\Sigma)$ if it decomposes as $X=M+A$, where
\begin{enumerate}
	\item $M$ is a càdlàg local martingale, with $M_{0}=0$;
	\item $A$ is an adapted càdlàg predictable process with finite variations such that $A_{0-}=A_{0}=0$; 
	\item  $\int_{0}^{t}{1_{\{X_{s-}\neq0\}}dA_{s}}=0$ for all $t\geq0$.
\end{enumerate}
\end{defn}
Notably, the two above-mentioned classes coincide for the processes $X$,  whose finite variational part $A$ is considered continuous (i.e., class $(\Sigma)$ under the hypotheses considered by Nikeghbali \cite{nik} and Cheridito et al. \cite{pat}). However, it is possible to determine processes belonging to at least one of these classes that are not present in another class. 

 This study contributes toward existing literature by enriching the general framework and developing techniques for dealing with stochastic processes of class $(\Sigma^{r})$ and the càdlàg semi-martingales of class $(\Sigma)$, whose finite variational part is càdlàg instead of continuous. First, we study the processes of class $(\Sigma^{r})$ by proposing a new method to characterize such stochastic processes.

Second, we present a general framework that unifies the study of the two above-mentioned classes. More precisely, we propose a new larger class that includes all the processes of the classes $(\Sigma)$ and $(\Sigma^{r})$. We term this class as $(\Sigma^{g})$ and define it as follows:
\begin{defn}\label{d1}
We say that a stochastic process $X$ is of the class $(\Sigma^{g})$ if it decomposes as $X=M+A$, where
\begin{enumerate}
	\item $M$ is a càdlàg local martingale, with $M_{0}=0$;
	\item $A$ is an adapted càdlàg predictable process with finite variations such that $A_{0-}=A_{0}=0$; 
	\item  $\int_{0}^{t}{1_{\{X_{s}X_{s-}\neq0\}}dA_{s}}=0$ for all $t\geq0$.
\end{enumerate}
\end{defn}
Hence, we explore and extend the general properties obtained for classes $(\Sigma)$ and $(\Sigma^{r})$ in \cite{nik,pat,mult, Akdim}. For instance, we study the positive and negative parts of the processes of class $(\Sigma^{g})$ and show that the product of the processes of class $(\Sigma^{g})$ with vanishing quadratic covariation also belongs to class $(\Sigma^{g})$. Further, we show that every positive process $X$ of class $(\Sigma^{g})$ has a multiplicative decomposition. In other words, it can be decomposed as
$$X=CW-1,$$
where $W$ is a positive local martingale with $W_{0}=1$, and $C$ is a non-decreasing process. This result is an extension of that obtained by Nikeghbali for positive and continuous submartingales \cite{mult}. We also present a result that enables the recovery of any process of class $(\Sigma^{g})$ from its final value $X_{\infty}$ and of an honest time $g$, which is the last time $(X_{t}:t\geq0)$ or $(X_{t-}:t\geq0)$ visited the origin. More precisely, this formula has the following form:
$$X_{t}=E\left[X_{\infty}1_{\{g\leq t\}}|\mathcal{F}_{t}\right],$$ 
where $X$ is the process of class $(\Sigma^{g})$,$X_{\infty}=\lim_{t\to+\infty}{X_{t}}$, and $g=\sup\{t\geq0:X_{t}X{t-}=0\}$. Finally, we generalize the result of Nikeghbali (Theorem 2.1 of \cite{nik}), which affords a martingale characterization for positive processes of class $(\Sigma)$.

The  remainder of this paper is organized as follows. In Section \ref{s1}, we present some useful preliminaries and introduce new characterization of class $(\Sigma^{r})$. Section \ref{s2} is devoted to the study of the new class $(\Sigma^{g})$. Finally, Section \ref{s3}  summarizes the related approaches and methods.

\section{Preliminaries and new characterization of the class \texorpdfstring{$(\Sigma^{r})$}{sigma(r)}}\label{s1}
	
The main purpose of this section is to contribute toward the framework for studying the processes of class $(\Sigma^{r})$. More precisely, we propose a new method for characterizing the positive processes of class $(\Sigma^{r})$. However, we first recall some results and notations that will be useful for understanding this work.
\subsection{Notations and Preliminaries}

In this work, we fix a filtered probability space $(\Omega,(\mathcal{F}_{t})_{t\geq0},\mathcal{F}_{t},\P)$ that satisfies the usual conditions. Throughout this work, for any càdlàg stochastic process $X$, we consider that  $X^{c}$ is its continuous part, and $(X_{t-})_{t\geq0}$ denotes the process defined by $\forall t>0$, where $X_{t-}$ is the left limit of $X$ in $t$ and $X_{0-}=X_{0}$.

Now, let us recall the version of class $(\Sigma)$ studied by Nikeghbali \cite{nik} and Cheridito et al. \cite{pat}
\begin{defn}\label{dnik}
We say that a semi-martingale $X$ is of class $(\Sigma)$ if it decomposes as $X=M+A$, where
\begin{enumerate}
	\item $M$ is a càdlàg local martingale, with $M_{0}=0$;
	\item $A$ is an adapted continuous process with finite variations such that $A_{0}=0$; 
	\item  $\int_{0}^{t}{1_{\{X_{s}\neq0\}}dA_{s}}=0$ for all $t\geq0$.
\end{enumerate}
\end{defn}

Nikeghbali's Theorem 2.1 \cite{nik} serves as a method to characterize the processes that satisfy the assumptions of Definition \ref{dnik}. This result is called the characterization martingale theorem. We recall it as follows:

\begin{theorem}\label{tnik}
Let $X=M+A$ be a positive semi-martingale. Then, the following are equivalent:
\begin{enumerate}
  \item $X\in(\Sigma)$;
	\item There exists a non-decreasing continuous process $V$ with $V_{0}=0$ such that, for any locally bounded Borel function $f$ with $F(x)=\int_{0}^{x}{f(z)dz}$, the process 
	$$\left(F(V_{t})-f(V_{t})X_{t};t\geq0\right)$$
is a càdlàg local martingale and $V\equiv A$.
\end{enumerate}
\end{theorem}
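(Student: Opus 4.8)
The plan is to prove the two implications separately. For $(1)\Rightarrow(2)$ I will take $V:=A$ and establish two things: (a) that $A$ is continuous and non-decreasing with $A_{0}=0$, and (b) that $F(V_{\cdot})-f(V_{\cdot})X_{\cdot}$ is a c\`adl\`ag local martingale for every locally bounded Borel $f$; for $(2)\Rightarrow(1)$ I will feed a single well-chosen $f$ into the hypothesis and read off condition~3 of Definition~\ref{dnik}. The two tools I expect to use are the Tanaka--Meyer formula (to identify the finite variation part) and It\^o's formula together with the balayage formula and a Stieltjes change of variables (for the martingale property).

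For $(1)\Rightarrow(2)$, assume $X=M+A$ satisfies Definition~\ref{dnik}. Since $A$ is continuous the measure $dA$ is non-atomic, while $\{s:X_{s}=0\}$ and $\{s:X_{s-}=0\}$ differ only on the a.s. countable set of jump times of $X$; hence the support hypothesis $\int_{0}^{t}\1_{\{X_{s}\neq0\}}\,dA_{s}=0$ upgrades to $\int_{0}^{t}\1_{\{X_{s-}\neq0\}}\,dA_{s}=0$, i.e. $\int_{0}^{t}\1_{\{X_{s-}=0\}}\,dA_{s}=A_{t}$. Applying the Tanaka--Meyer formula to $x\mapsto x^{+}$ and using $X\ge0$ (so $X^{+}=X$, $X^{-}\equiv0$, $X_{s-}\ge0$, and the jump corrections collapse to $\sum_{s\le t}\1_{\{X_{s-}=0\}}\Delta X_{s}$), I would extract the identity $\int_{0}^{t}\1_{\{X_{s-}=0\}}\,dX_{s}^{c}=\tfrac12 L_{t}^{0}$, where $L^{0}$ is the local time of $X$ at $0$; since $X^{c}=M^{c}+A$, this rewrites as $\int_{0}^{t}\1_{\{X_{s-}=0\}}\,dM_{s}^{c}=\tfrac12 L^{0}_{t}-A_{t}$. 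The left-hand side is a continuous local martingale and the right-hand side has finite variation, so both vanish, giving $A_{t}=\tfrac12 L_{t}^{0}$ --- in particular $A$ is continuous, non-decreasing and $A_{0}=0$. For part (b) I would first treat $f\in C^{1}$: then $f(A_{\cdot})$ is continuous of finite variation with $[f(A),X]=0$, and It\^o's formula together with the (two-sided) support condition $X_{s-}\,dA_{s}=0$ gives $F(A_{t})-f(A_{t})X_{t}=-\int_{0}^{t}f(A_{s})\,dM_{s}$, a c\`adl\`ag local martingale. To reach arbitrary locally bounded Borel $f$ I would run a functional monotone class argument: the family of bounded Borel $f$ for which the conclusion holds contains $C^{1}$ (hence $C_{b}$) and is stable under bounded pointwise limits, since $F_{n}(A_{t})\to F(A_{t})$, $f_{n}(A_{t})X_{t}\to f(A_{t})X_{t}$ and $\int_{0}^{\cdot}f_{n}(A_{s})\,dM_{s}\to\int_{0}^{\cdot}f(A_{s})\,dM_{s}$ by dominated convergence for stochastic integrals; the locally bounded case follows by localising along $T_{n}=\inf\{t:A_{t}\ge n\}\uparrow\infty$. (Alternatively, since $dA$ does not charge $\{X\neq0\}$ one has $A_{t}=A_{\gamma_{t}}$ with $\gamma_{t}=\sup\{s\le t:X_{s}=0\}$, and the balayage formula applied to the predictable process $k_{s}=f(A_{s})$ gives $f(A_{t})X_{t}=\int_{0}^{t}f(A_{s})\,dX_{s}$ directly, after which $\int_{0}^{t}f(A_{s})\,dA_{s}=F(A_{t})$ by the change-of-variables formula for the non-decreasing continuous function $A$, valid for every locally bounded Borel $f$.)

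For $(2)\Rightarrow(1)$, assume the process $V$ of statement~(2) exists. Since $V\equiv A$, the finite variation part is already continuous, non-decreasing and null at $0$, so only condition~3 of Definition~\ref{dnik} remains. I would apply the hypothesis to $f(x)=x$ (for which $F(x)=x^{2}/2$): the process $\tfrac12 V_{t}^{2}-V_{t}X_{t}$ is a local martingale, and It\^o's formula (using $V$ continuous of finite variation and $[V,X]=0$) gives $\tfrac12 V_{t}^{2}-V_{t}X_{t}=-\int_{0}^{t}V_{s}\,dM_{s}-\int_{0}^{t}X_{s}\,dV_{s}$. Since the left-hand side and $\int_{0}^{\cdot}V_{s}\,dM_{s}$ are local martingales, the continuous finite-variation process $\int_{0}^{t}X_{s}\,dV_{s}$ is a local martingale, hence identically $0$; because $X\ge0$ and $dV\ge0$ this forces $\int_{0}^{t}\1_{\{X_{s}\neq0\}}\,dV_{s}=0$, i.e. $\int_{0}^{t}\1_{\{X_{s}\neq0\}}\,dA_{s}=0$, so $X\in(\Sigma)$.

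The step I expect to be the main obstacle is proving that $A$ is non-decreasing in $(1)\Rightarrow(2)$: this is exactly where the c\`adl\`ag (rather than continuous) nature of $X$ bites, both in keeping track of the jump corrections in the Tanaka--Meyer formula and in transferring the support condition from $\{X_{s}=0\}$ to $\{X_{s-}=0\}$ --- the continuity of $A$, hence the non-atomicity of $dA$, being precisely what makes the latter transfer harmless. The remaining delicate point, the passage from smooth $f$ to locally bounded Borel $f$ in part (b), is more routine but still needs the monotone class plus dominated-convergence argument with localisation (or the balayage formula combined with the Stieltjes identity $\int_{0}^{t}f(A_{s})\,dA_{s}=F(A_{t})$, which holds for all locally bounded Borel $f$ precisely because $A$ has already been shown to be continuous and non-decreasing).
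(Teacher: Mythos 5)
The paper does not actually prove Theorem~\ref{tnik}: it is recalled verbatim from Nikeghbali \cite{nik} as a known result. The natural comparison is therefore with the paper's proofs of its analogues, Theorem~\ref{t1} and Theorem~\ref{t5} (via Lemmas~\ref{lm1} and~\ref{lm6}), and your argument has the same computational core: integration by parts to get $F(A_t)-f(A_t)X_t=-\int_0^t f(A_s)\,dM_s$ for the direct implication, and testing the hypothesis with $F(x)=x$ and $F(x)=x^2$ (equivalently $f(x)=x$) to extract $\int_0^t X_s\,dV_s=0$ for the converse. You add two ingredients that the paper's versions sidestep but that Nikeghbali's statement genuinely requires: (i) the verification that $A$ is non-decreasing (the paper's proofs of $(1)\Rightarrow(2)$ simply set $V=A$ without checking monotonicity, even though assertion (2) asserts it), and (ii) the passage from $f\in C^1$ to locally bounded Borel $f$, which you handle either by monotone class plus dominated convergence or by the balayage formula with $k_s=f(A_s)$ and the Stieltjes change of variables $\int_0^t f(A_s)\,dA_s=F(A_t)$ --- the latter being essentially Nikeghbali's original route. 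Both additions are correct and necessary here; the converse direction is sound as written.

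One technical slip to repair in your monotonicity argument: the quantity $\int_0^t 1_{\{X_{s-}=0\}}\,dM_s-\sum_{s\le t}1_{\{X_{s-}=0\}}\Delta M_s$ is \emph{not} $\int_0^t 1_{\{X_{s-}=0\}}\,dM_s^{c}$, because the purely discontinuous martingale part of $\int H\,dM$ is the \emph{compensated} sum of jumps, not the raw sum. Consequently the identity $A_t=\tfrac12 L_t^0$ fails whenever $M$ jumps on $\{X_{-}=0\}$. The correct conclusion from your Tanaka computation is $X_t=(\text{local martingale})+\tfrac12 L_t^0+\sum_{s\le t}1_{\{X_{s-}=0\}}X_s$, whence, by uniqueness of the decomposition with predictable finite-variation part, $A_t=\tfrac12 L_t^0+\widetilde U_t$, where $\widetilde U$ is the dual predictable projection of the non-decreasing jump sum $U_t=\sum_{s\le t}1_{\{X_{s-}=0\}}X_s\ge0$. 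Since $\widetilde U$ is non-decreasing, $A$ is still non-decreasing, so the step you correctly identified as the main obstacle does go through --- just not via the identity $A=\tfrac12 L^0$.
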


This result was extended by Eyi Obiang et al. \cite{fjo} for càdlàg non-negative processes satisfying Definition \ref{d1}, as follows:

\begin{theorem}\label{teyi}
Let $X=M+A$ be a positive and càdlàg semi-martingale. Then, the following are equivalent:
\begin{enumerate}
  \item $X\in(\Sigma)$;
	\item There exists a càdlàg non-decreasing predictable process with finite variations $V$ such that, for any $f\in C^1$ with $F(x)=\int_{0}^{x}{f(z)dz}$, the process 
	$$\left(F(V^{c}_{t})-f(V^{c}_{t})X_{t}+\sum_{s\leq t}{[f(V^{c}_{s})-f^{'}(V^{c}_{s})X_{s}]\Delta V_{s}};t\geq0\right)$$
is a càdlàg local martingale and $V\equiv A$.
\end{enumerate}
\end{theorem}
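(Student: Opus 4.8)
The plan is to run a single change-of-variables computation in both directions. Since $V^{c}$ is continuous of finite variation, for $f\in C^{1}$ the chain rule gives $F(V^{c}_{t})=\int_{0}^{t}f(V^{c}_{s})\,dV^{c}_{s}$, the bracket $[f(V^{c}),X]$ vanishes, and integration by parts yields $f(V^{c}_{t})X_{t}=\int_{0}^{t}f(V^{c}_{s})\,dX_{s}+\int_{0}^{t}X_{s-}f'(V^{c}_{s})\,dV^{c}_{s}$. For $(1)\Rightarrow(2)$ I would take $V:=A$ (legitimate, since the finite-variation part of a non-negative process of class $(\Sigma)$ is non-decreasing; see \cite{nik,pat}), substitute $dX=dM+dA$, split $dA$ into its continuous and jump parts, and collect terms, after which the process displayed in the statement reduces to
$$-\int_{0}^{t} f(V^{c}_{s})\,dM_{s}-\int_{0}^{t} X_{s-}f'(V^{c}_{s})\,dA^{c}_{s}-\sum_{s\le t}f'(V^{c}_{s})X_{s}\,\Delta A_{s}.$$
The first term is a local martingale; the sum is $0$ because $\int_{0}^{t}1_{\{X_{s}\neq0\}}\,dA_{s}=0$ forces $X_{s}=0$ at every jump time of $A$; and the middle integral is $0$ because that same condition carries $dA^{c}$ on $\{X_{s}=0\}$, while the diffuse measure $dA^{c}$ ignores the countable set $\{\Delta X\neq0\}$, off which $X_{s-}=X_{s}=0$. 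This gives $(1)\Rightarrow(2)$.

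For $(2)\Rightarrow(1)$, I would first take $f\equiv1$ (so $F(x)=x$, $f'\equiv0$): the displayed process becomes $V^{c}_{t}-X_{t}+\sum_{s\le t}\Delta V_{s}=V_{t}-X_{t}$, so $X=M'+V$ for a local martingale $M'$; since $X=M+A$ realises $X$ as a special semimartingale, uniqueness of the canonical decomposition forces $M'=M$ and $V\equiv A$. Running the computation above with $X=M+V$, the displayed process equals $-\int_{0}^{t}f(V^{c}_{s})\,dM_{s}-R^{f}_{t}$, where $R^{f}_{t}:=\int_{0}^{t}X_{s-}f'(V^{c}_{s})\,dV^{c}_{s}+\sum_{s\le t}f'(V^{c}_{s})X_{s}\,\Delta V_{s}$, so $R^{f}$ is a local martingale. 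Substituting $X_{s}=X_{s-}+\Delta M_{s}+\Delta V_{s}$ in the sum splits $R^{f}=P^{f}+Q^{f}$, where $P^{f}$ is a predictable finite-variation process null at $0$ and $Q^{f}_{t}=\sum_{s\le t}f'(V^{c}_{s})\Delta V_{s}\,\Delta M_{s}=[\Phi,M]_{t}$ with $\Phi_{t}:=\sum_{s\le t}f'(V^{c}_{s})\Delta V_{s}$ predictable of finite variation. Since the bracket of a predictable finite-variation process with a local martingale is a local martingale, $Q^{f}$ — hence $P^{f}=R^{f}-Q^{f}$ — is a local martingale; a predictable finite-variation local martingale null at $0$ vanishes, so $P^{f}\equiv0$, i.e.
$$\int_{0}^{t}X_{s-}f'(V^{c}_{s})\,dV^{c}_{s}+\sum_{s\le t}f'(V^{c}_{s})\bigl(X_{s-}+\Delta V_{s}\bigr)\Delta V_{s}=0\qquad(t\ge0,\ f\in C^{1}).$$

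To close $(2)\Rightarrow(1)$, I would let $f'$ range over the non-negative continuous functions: as $X\ge0$, $X_{s-}\ge0$ and $V$ is non-decreasing, the three non-negative summands $\int_{0}^{t}X_{s-}f'(V^{c}_{s})\,dV^{c}_{s}$, $\sum_{s\le t}f'(V^{c}_{s})X_{s-}\Delta V_{s}$ and $\sum_{s\le t}f'(V^{c}_{s})(\Delta V_{s})^{2}$ must each vanish. Taking $f'>0$, vanishing of the third forces $\Delta V\equiv0$, so $V=V^{c}$ is continuous; then $dV$ is diffuse, $X_{s}=X_{s-}$ holds $dV$-a.e., and vanishing of the first reads $\int_{0}^{t}X_{s}f'(V_{s})\,dV_{s}=0$ for all non-negative continuous $f'$. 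With $X\ge0$, $dV\ge0$ this yields $X_{s}=0$ $dV$-a.e., i.e. $\int_{0}^{t}1_{\{X_{s}\neq0\}}\,dA_{s}=0$, so $X\in(\Sigma)$.

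The integration-by-parts bookkeeping and the localizations needed to justify the stochastic integrals ($f,f'$ bounded on compacts; stop $X$ and $V^{c}$ at their first passage above $n$; note $\sum_{s\le t}|\Delta M_{s}\Delta V_{s}|\le[M]_{t}^{1/2}\bigl(\sum_{s\le t}(\Delta V_{s})^{2}\bigr)^{1/2}<\infty$) are routine. The two load-bearing steps are, in $(1)\Rightarrow(2)$, upgrading ``$dA^{c}$ carried by $\{X=0\}$'' to ``$X_{s-}=0$ $dA^{c}$-a.e.'' via diffuseness of $dA^{c}$, and, in $(2)\Rightarrow(1)$, isolating the predictable finite-variation part $P^{f}$ of the local martingale $R^{f}$ — which hinges on $[\Phi,M]$ being a local martingale and on predictable finite-variation local martingales vanishing. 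I expect this last extraction (together with invoking that the finite-variation part of a non-negative process of class $(\Sigma)$ is non-decreasing, used in the first implication) to be the main obstacle; once it is in hand, the positivity-and-monotonicity argument with varying $f'$ finishes the proof, and it incidentally shows that $V$ has no jumps.
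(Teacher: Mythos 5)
The paper never proves Theorem \ref{teyi} --- it is recalled from \cite{fjo} --- so the only internal benchmarks are the proofs of Theorems \ref{t1} and \ref{t5}. Measured against those, your $(2)\Rightarrow(1)$ is a genuinely different and much heavier route: the paper tests only $F(x)=x$ (to get $V\equiv A$ by Doob--Meyer, exactly as you do) and then $F(x)=x^{2}$, after which integration by parts leaves a \emph{non-negative} local martingale vanishing at zero, which is therefore identically zero; splitting its two non-negative summands gives the carrying property at once. Your detour through Yoeurp's lemma, the vanishing of predictable finite-variation local martingales, and the family of all non-negative $f'$ appears correct and buys a stronger conclusion (it forces $V$ to be continuous), but two fixed test functions suffice and avoid the bookkeeping you flag as the main obstacle.

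The genuine gap is in $(1)\Rightarrow(2)$, at exactly the step you dispose of by citation: ``the finite-variation part of a non-negative process of class $(\Sigma)$ is non-decreasing; see \cite{nik,pat}.'' Those references prove this for \emph{continuous} $A$, and the claim does not transfer to Definition \ref{di1}. Indeed, if $T$ is a predictable time with $\Delta A_{T}\neq0$, then $X_{T}=0$ (as $dA$ is carried by $\{X=0\}$), so $\Delta X_{T}=-X_{T-}$ and, $A$ being the predictable part, $\Delta A_{T}=E[\Delta X_{T}\mid\mathcal{F}_{T-}]=-X_{T-}\leq0$: every jump of $A$ is non-positive. A concrete positive element of $(\Sigma)$ exhibiting this is $X_{t}=|B_{t}|1_{\{t<1\}}$ for a Brownian motion $B$, whose canonical decomposition has $A_{t}=L_{t\wedge1}-|B_{1}|1_{\{t\geq1\}}$ with $L$ the local time at $0$; this $A$ is not non-decreasing, so no non-decreasing $V$ with $V\equiv A$ exists. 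Thus the monotonicity (equivalently, by your own $(2)\Rightarrow(1)$ analysis, the continuity) of $A$ is the crux of $(1)\Rightarrow(2)$ and cannot be imported from the continuous-$A$ literature: you must either prove it under whatever hypotheses \cite{fjo} actually imposes or add it as an assumption. The remainder of your $(1)\Rightarrow(2)$ computation --- killing the jump sum because $X=0$ at every atom of $dA$, and killing the $dA^{c}$ integral by diffuseness of $dA^{c}$ --- is fine.
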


 Now, recall that the family of processes of class $(\Sigma)$ considered by Nikeghbali (Definition \ref{dnik}) is also included in class $(\Sigma^{r})$. Indeed, it suffices to say that if $A$ is a continuous process, we have
$$\int_{0}^{t}{1_{\{X_{s-}\neq0\}}dA_{s}}=\int_{0}^{t}{1_{\{X_{s}\neq0\}}dA_{s}}=0.$$
Hence, in the next subsection, we present an extension of the characterization martingale theorem for the processes of class $(\Sigma^{r})$.

{\subsection{New characterization result for the class \texorpdfstring{$(\Sigma^{r})$}{sigma(r)}}}
 Let us begin with an extension of Lemma 2.3 of \cite{pat}.
\begin{lem}\label{lm1}
Let $X=M+A$ be a process of the class $(\Sigma^{r})$ and $A^{c}$ be the continuous part of $A$. For every $\mathcal{C}^{1}$ function $f$ and a function $F$ defined by $F(x)=\int_{0}^{x}{f(z)dz}$, the process 
$$\left(F(A^{c}_{t})-f(A^{c}_{t})X_{t}+\sum_{s\leq t}{[f(A^{c}_{s})-f^{'}(A^{c}_{s})X_{s-}]\Delta A_{s}};t\geq0\right)$$
is a càdlàg local martingale.
\end{lem}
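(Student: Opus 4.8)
The plan is to compute the semimartingale dynamics of the candidate process
$$Y_t = F(A^c_t) - f(A^c_t)X_t + \sum_{s\le t}\bigl[f(A^c_s) - f'(A^c_s)X_{s-}\bigr]\Delta A_s$$
and to check that, after all cancellations, only a stochastic integral against the local martingale $M$ remains; this is the càdlàg analogue (with jump terms) of the situation in Lemma 2.3 of \cite{pat}, where $A$ is continuous. Since $A^c$ is continuous and of finite variation and $f$ is $\mathcal C^1$, the process $t\mapsto f(A^c_t)$ is continuous, of finite variation, with $df(A^c_s) = f'(A^c_s)\,dA^c_s$; in particular $[f(A^c_\cdot),X]\equiv 0$. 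The integration-by-parts formula then gives
$$f(A^c_t)X_t = \int_0^t f(A^c_s)\,dX_s + \int_0^t X_{s-}f'(A^c_s)\,dA^c_s,$$
where the initial term vanishes because $f(A^c_0)X_0 = f(0)\cdot 0 = 0$. Likewise the change-of-variables formula for the continuous finite-variation process $A^c$ and the $\mathcal C^1$ function $F$ (with $F(0)=0$, $A^c_0=0$) yields $F(A^c_t) = \int_0^t f(A^c_s)\,dA^c_s$.

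Next I would substitute $dX_s = dM_s + dA_s = dM_s + dA^c_s + \Delta A_s$, which makes the two occurrences of $\int_0^\cdot f(A^c_s)\,dA^c_s$ cancel and leaves
$$F(A^c_t) - f(A^c_t)X_t = -\int_0^t f(A^c_s)\,dM_s - \sum_{s\le t} f(A^c_s)\Delta A_s - \int_0^t X_{s-}f'(A^c_s)\,dA^c_s.$$
Adding the jump sum $\sum_{s\le t}\bigl[f(A^c_s) - f'(A^c_s)X_{s-}\bigr]\Delta A_s$ from the statement, the $\sum f(A^c_s)\Delta A_s$ terms cancel and the remaining finite-variation pieces recombine into a single integral against $dA = dA^c + \Delta A$, giving
$$Y_t = -\int_0^t f(A^c_s)\,dM_s - \int_0^t f'(A^c_s)X_{s-}\,dA_s.$$

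The decisive step is to kill the last integral using the class $(\Sigma^{r})$ property. From $\int_0^t 1_{\{X_{s-}\neq 0\}}\,dA_s = 0$ for all $t$, the identically zero process $\int_0^\cdot 1_{\{X_{s-}\neq 0\}}\,dA_s$ has zero total variation, hence $\int_0^t 1_{\{X_{s-}\neq 0\}}\,d|A|_s = 0$, i.e. $d|A|$ is carried by $\{X_{s-}=0\}$. On that set the integrand $f'(A^c_s)X_{s-}$ vanishes, so $\int_0^t f'(A^c_s)X_{s-}\,dA_s = 0$ and therefore $Y_t = -\int_0^t f(A^c_s)\,dM_s$. Finally, $f(A^c_\cdot)$ is continuous and adapted, hence locally bounded and predictable, so this stochastic integral against the càdlàg local martingale $M$ is itself a càdlàg local martingale, which is the assertion. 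The only genuinely delicate point is the bookkeeping of the jump terms so that they cancel exactly; the appearance of the left limit $X_{s-}$ (rather than $X_s$) in the jump sum of the statement is precisely what is needed to make the leftover term match the $(\Sigma^{r})$ condition and vanish.
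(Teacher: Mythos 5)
Your proof is correct and follows essentially the same route as the paper's: integration by parts for $f(A^{c}_{t})X_{t}$, decomposition of $A$ into $A^{c}$ plus its jumps, and annihilation of the term $\int_{0}^{t}{f^{'}(A^{c}_{s})X_{s-}\,dA_{s}}$ via the class $(\Sigma^{r})$ property, leaving $-\int_{0}^{t}{f(A^{c}_{s})\,dM_{s}}$. The only difference is cosmetic (you cancel the jump sums at the end rather than at the start), and your explicit total-variation justification for why $d|A|$ is carried by $\{X_{s-}=0\}$ is a welcome extra precision.
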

\begin{proof}
Through integration by parts, we get
$$f(A^{c}_{t})X_{t}=\int_{0}^{t}{f(A^{c}_{s})dX_{s}}+\int_{0}^{t}{f^{'}(A^{c}_{s})X_{s-}dA^{c}_{s}}.$$
Hence, we have  
$$f(A^{c}_{t})X_{t}=\int_{0}^{t}{f(A^{c}_{s})dX_{s}}+\int_{0}^{t}{f^{'}(A^{c}_{s})X_{s-}dA_{s}}-\sum_{s\leq t}{f^{'}(A^{c}_{t})X_{s-}\Delta A_{s}}$$
because $A=A^{c}+\sum_{s\leq t}{\Delta A_{s}}$. Furthermore, we have $\int_{0}^{t}{f^{'}(A^{c}_{s})X_{s-}dA_{s}}=0$ since $dA$ is carried by $\{t\geq0:X_{t-}=0\}$. Therefore, it follows that
$$f(A^{c}_{t})X_{t}=\int_{0}^{t}{f(A^{c}_{s})dX_{s}}-\sum_{s\leq t}{f^{'}(A^{c}_{t})X_{s-}\Delta A_{s}}$$
$$\hspace{2cm}=\int_{0}^{t}{f(A^{c}_{s})dM_{s}}+\int_{0}^{t}{f(A^{c}_{s})dA^{c}_{s}}+\sum_{s\leq t}{[f(A^{c}_{s})-f^{'}(A^{c}_{s})X_{s-}]\Delta A_{s}}.$$
Consequently,
$$f(A^{c}_{t})X_{t}=\int_{0}^{t}{f(A^{c}_{s})dM_{s}}+F(A^{c}_{t})+\sum_{s\leq t}{[f(A^{c}_{s})-f^{'}(A^{c}_{s})X_{s-}]\Delta A_{s}}.$$
This implies that
$$F(A^{c}_{t})+\sum_{s\leq t}{[f(A^{c}_{s})-f^{'}(A^{c}_{s})X_{s-}]\Delta A_{s}}-f(A^{c}_{t})X_{t}=-\int_{0}^{t}{f(A^{c}_{s})dM_{s}}.$$
This completes the proof.
\end{proof}

Now, we shall present our martingale characterization theorem for the class $(\Sigma^{r})$.

\begin{theorem}\label{t1}
Let $X=M+A$ be a positive semi-martingale. Then, the following are equivalent:
\begin{enumerate}
  \item $X\in(\Sigma^{r})$;
	\item There exists a non-decreasing predictable process $V$ such that, for any $F\in C^2$, the process 
	$$\left(F(V^{c}_{t})-F^{'}(V^{c}_{t})X_{t}+\sum_{s\leq t}{[F^{'}(V^{c}_{s})-F^{''}(V^{c}_{s})X_{s-}]\Delta V_{s}};t\geq0\right)$$
is a càdlàg local martingale and $V\equiv A$.
\end{enumerate}
\end{theorem}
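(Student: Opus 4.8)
The plan is to establish the two implications separately, each time reducing to the classical structural fact that a predictable local martingale of finite variation which is null at the origin is indistinguishable from the zero process.

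For the implication $(1)\Rightarrow(2)$ I would take $V:=A$. Then $V$ is predictable, of finite variation, with $V_{0-}=V_{0}=0$, and $V\equiv A$ holds trivially; that $V$ is non-decreasing is the $(\Sigma^{r})$-counterpart of the known fact that the finite variational part of a positive process of class $(\Sigma)$ is non-decreasing, which I would either cite (cf. \cite{fjo,Akdim}) or re-derive through a Tanaka-type identity for $X=|X|$. It then remains to check the martingale property: fixing $F\in C^{2}$ and applying Lemma \ref{lm1} to $f:=F'\in C^{1}$ — whose primitive is $\int_{0}^{x}F'(z)\,dz=F(x)-F(0)$ — Lemma \ref{lm1} says that
$$\Big(F(A^{c}_{t})-F(0)-F'(A^{c}_{t})X_{t}+\sum_{s\leq t}\big[F'(A^{c}_{s})-F''(A^{c}_{s})X_{s-}\big]\Delta A_{s};\ t\geq0\Big)$$
is a c\`{a}dl\`{a}g local martingale; discarding the additive constant $F(0)$ and using $V^{c}=A^{c}$ and $\Delta V=\Delta A$, this is precisely the process appearing in $(2)$.

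For the converse $(2)\Rightarrow(1)$, I would first use $V\equiv A$ (this is part of the hypothesis, and can anyway be recovered by testing $(2)$ with $F(x)=x$: the displayed process becomes $V_{t}-X_{t}=(V_{t}-A_{t})-M_{t}$, a predictable finite-variation local martingale, hence $\equiv0$, whence $V\equiv A$), so that $V^{c}=A^{c}$. The decisive step is to test $(2)$ with $F(x)=\tfrac12x^{2}$, for which $F'(x)=x$ and $F''\equiv1$. Integration by parts applied to $A^{c}_{t}X_{t}$ — exploiting that $A^{c}$ is continuous and of finite variation, so $A^{c}_{s-}=A^{c}_{s}$ and $[A^{c},X]\equiv0$ — together with $A=A^{c}+\sum_{s\leq t}\Delta A_{s}$, the identity $\int_{0}^{t}A^{c}_{s}\,dA^{c}_{s}=\tfrac12(A^{c}_{t})^{2}$, and $\int_{0}^{t}X_{s-}\,dA^{c}_{s}=\int_{0}^{t}X_{s-}\,dA_{s}-\sum_{s\leq t}X_{s-}\Delta A_{s}$, rearranges into
$$\tfrac12(A^{c}_{t})^{2}-A^{c}_{t}X_{t}+\sum_{s\leq t}\big[A^{c}_{s}-X_{s-}\big]\Delta A_{s}=-\int_{0}^{t}A^{c}_{s}\,dM_{s}-\int_{0}^{t}X_{s-}\,dA_{s}.$$
The left-hand side is the process of $(2)$ for this $F$, hence a local martingale, while $\int_{0}^{\cdot}A^{c}_{s}\,dM_{s}$ is a local martingale ($A^{c}$ being continuous, hence locally bounded and predictable); therefore $t\mapsto\int_{0}^{t}X_{s-}\,dA_{s}$ is a local martingale. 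As it is also predictable, of finite variation, and null at $t=0$, it is indistinguishable from $0$, so $\int_{0}^{t}X_{s-}\,dA_{s}=0$ for every $t$. Since $X\geq0$, this forces $X_{s-}=0$ $dA$-almost everywhere, i.e. $\int_{0}^{t}1_{\{X_{s-}\neq0\}}\,dA_{s}=0$ for all $t$, which is exactly $X\in(\Sigma^{r})$. (Running the same computation with a general $F\in C^{2}$ yields $\int_{0}^{t}X_{s-}F''(A^{c}_{s})\,dA_{s}=0$; only the case $F''\equiv1$ is needed.)

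I expect the one genuinely delicate point to be the jump bookkeeping in the integration-by-parts identity used for the converse: the splitting $A=A^{c}+\sum_{s\leq t}\Delta A_{s}$ must be handled so that the continuous-part terms telescope into $F(A^{c})$ while the discrete terms reproduce \emph{exactly} the sum $\sum_{s\leq t}[F'(A^{c}_{s})-F''(A^{c}_{s})X_{s-}]\Delta A_{s}$ featuring in $(2)$. Everything else rests on Lemma \ref{lm1} and on the classical fact that a predictable, finite-variation local martingale null at the origin vanishes; once these are available, the two test functions $F(x)=x$ and $F(x)=\tfrac12x^{2}$ suffice to close both directions.
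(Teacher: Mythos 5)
Your proposal is correct and follows essentially the same route as the paper: take $V=A$ and invoke Lemma \ref{lm1} for $(1)\Rightarrow(2)$, then test with $F(x)=x$ (to identify $V$ with $A$ via uniqueness of the Doob--Meyer decomposition) and with a quadratic $F$ plus integration by parts to extract $\int_{0}^{t}X_{s-}\,dA_{s}=0$ for $(2)\Rightarrow(1)$. Your added care about the constant $F(0)$ and about justifying that $A$ is non-decreasing for a positive process of class $(\Sigma^{r})$ addresses points the paper leaves implicit, but the argument is the same.
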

\begin{proof}
$(1)\Rightarrow(2)$ Let us consider $V=A$. Hence, from Lemma \ref{lm1}, we determine that
$$\left(F(A^{c}_{t})-F^{'}(A^{c}_{t})X_{t}+\sum_{s\leq t}{[F^{'}(A^{c}_{s})-F^{''}(A^{c}_{s})X_{s-}]\Delta A_{s}};t\geq0\right)$$
is a càdlàg local martingale.\\
$(2)\Rightarrow(1)$ First, let $F(x)=x$. Then, the process $W$ defined by 
$$W_{t}=V^{c}_{t}+\sum_{s\leq t}{\Delta V_{s}}-X_{t}=V_{t}-X_{t}$$
is a local martingale. Hence, owing to the uniqueness of the Doob--Meyer decomposition, we obtain $V=A$. Next, we take $F(x)=x^{2}$. Thus, process $B$ defined by 
$$B_{t}=(V_{t}^{c})^{2}-2V_{t}^{c}X_{t}+2\sum_{s\leq t}{V_{s}^{c}\Delta V_{s}}-2\sum_{s\leq t}{X_{s-}\Delta V_{s}}$$
is a local martingale. However, through integration by parts, it follows that
$$B_{t}=2\int_{0}^{t}{V^{c}_{s}dV^{c}_{s}}-2\int_{0}^{t}{V^{c}_{s}dX_{s}}-2\int_{0}^{t}{X_{s-}dV^{c}_{s}}+2\sum_{s\leq t}{V_{s}^{c}\Delta V_{s}}-2\sum_{s\leq t}{X_{s-}\Delta V_{s}}$$
$$\hspace{-0.75cm}=2\int_{0}^{t}{V^{c}_{s}d\left(V^{c}_{s}+\sum_{u\leq s}{\Delta V_{u}}-X_{s}\right)}-2\int_{0}^{t}{X_{s-}d\left(V^{c}_{s}+\sum_{u\leq s}{\Delta V_{u}}\right)}$$
$$\hspace{-6cm}=2\int_{0}^{t}{V^{c}_{s}dW_{s}}-2\int_{0}^{t}{X_{s-}dV_{s}}.$$
Consequently, we must have
$$\int_{0}^{t}{X_{s-}dV_{s}}=0.$$
In other words, $dA$ is carried  by the set $\{t\geq0:X_{t-}=0\}$.
\end{proof}

\section{Characterization of a new class of stochastic processes}\label{s2}

We propose unifying the study of the stochastic processes of classes $(\Sigma)$ and $(\Sigma^{r})$. More precisely, we provide a general framework to study a larger class that we term as class $(\Sigma^{g})$. 

{\subsection{First characterization and some properties}}

As is evident from the above definition, classes $(\Sigma)$ and $(\Sigma^{r})$ are included in class $(\Sigma^{g})$. Indeed, we can see that $\{X_{t}=0\}\subset\{X_{t}X_{t-}=0\}$ and $\{X_{t-}=0\}\subset\{X_{t}X_{t-}=0\}$. However, there exist processes of class $(\Sigma^{g})$ that do not belong to classes  $(\Sigma)$ and $(\Sigma^{r})$. Next, we present the first method to characterize the stochastic processes of class $(\Sigma^{g})$.
\begin{theorem}\label{t2}
Let $X=M+A$ be a càdlàg semi-martingale. Then, the following are equivalent:
\begin{enumerate}
	\item $X\in(\Sigma^{g})$;
	\item there exist two predictable processes $C$ and $V$ such that $A=C+V$ and 
	$$\int_{0}^{t}{1_{\{X_{s}\neq0\}}dC_{s}}=\int_{0}^{t}{1_{\{X_{s-}\neq0\}}dV_{s}}=0.$$
\end{enumerate}
\end{theorem}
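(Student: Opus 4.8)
The plan is to prove both implications by splitting the finite variation process $A$ according to whether the driving "zero set" is $\{X_s = 0\}$ or $\{X_{s-}=0\}$. The key observation is that the measure $dA$ is carried by the set $\{X_s X_{s-} = 0\}$, which decomposes (up to $dA$-null sets) as the disjoint union of $\{X_s = 0\}$ and $\{X_s \neq 0, X_{s-} = 0\}$.

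\textbf{Direction $(2)\Rightarrow(1)$.} This is the easy direction. Suppose $A = C + V$ with $dC$ carried by $\{X_s = 0\}$ and $dV$ carried by $\{X_{s-}=0\}$. Since $\{X_s = 0\}\subset\{X_s X_{s-}=0\}$ and $\{X_{s-}=0\}\subset\{X_s X_{s-}=0\}$, for every $t\geq 0$ we have
$$\int_0^t 1_{\{X_s X_{s-}\neq 0\}}\,dA_s = \int_0^t 1_{\{X_s X_{s-}\neq 0\}}\,dC_s + \int_0^t 1_{\{X_s X_{s-}\neq 0\}}\,dV_s = 0,$$
because on $\{X_s X_{s-}\neq 0\}$ we have in particular $X_s \neq 0$ (killing the $dC$ integral) and $X_{s-}\neq 0$ (killing the $dV$ integral). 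Since $A$ is predictable with finite variation and $A_{0-}=A_0=0$ (inherited from $C,V$ or imposed), $X = M + A$ is of class $(\Sigma^g)$.

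\textbf{Direction $(1)\Rightarrow(2)$.} This is the main content. Given $X\in(\Sigma^g)$, define
$$C_t := \int_0^t 1_{\{X_{s-}=0\}}\,dA_s, \qquad V_t := \int_0^t 1_{\{X_{s-}\neq 0\}}\,dA_s,$$
so that $A = C+V$ trivially, and $C,V$ are predictable with finite variation (as $\{X_{s-}=0\}$ is a predictable set and $A$ is predictable). By construction $dV$ is carried by $\{X_{s-}\neq 0\}^c = \{X_{s-}=0\}$, so $\int_0^t 1_{\{X_{s-}\neq 0\}}\,dV_s = 0$ is immediate. It remains to show $\int_0^t 1_{\{X_s\neq 0\}}\,dC_s = 0$, i.e. that $dC$ is carried by $\{X_s=0\}$. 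Now $dC = 1_{\{X_{s-}=0\}}\,dA$, and on the support of $dA$ we have $X_s X_{s-}=0$; hence on the support of $dC$ we have $X_{s-}=0$ and $X_s X_{s-}=0$, which is automatic. The point is the reverse: we must rule out contributions where $X_{s-}=0$ but $X_s\neq 0$. On such a set, since $\Delta A_s = X_s - X_{s-} - \Delta M_s$ and $A$ is predictable while the jumps of $A$ at predictable times must be... here the argument needs care. I would instead argue as follows: write $1_{\{X_{s-}=0\}} = 1_{\{X_{s-}=0, X_s = 0\}} + 1_{\{X_{s-}=0, X_s \neq 0\}}$. The first piece gives a measure carried by $\{X_s=0\}$. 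For the second piece, note $\{X_{s-}=0, X_s\neq 0\}$ meets the path of $X$ only at countably many jump times, so $\int_0^t 1_{\{X_{s-}=0, X_s\neq 0\}}\,dA_s = \sum_{s\leq t} 1_{\{X_{s-}=0, X_s\neq 0\}}\Delta A_s$. At such a time $s$, $\Delta A_s = \Delta X_s - \Delta M_s = X_s - \Delta M_s$; but since the total measure $dA$ is carried by $\{X_u X_{u-}=0\}$ and $A$ is the predictable FV part, the jump $\Delta A_s$ at a time where the limit from the left is $0$ must itself vanish — this is where I expect the main obstacle, and the clean way to handle it is to test against the function $F$ from Theorem \ref{t1}-type computations or simply to redefine the decomposition symmetrically. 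A cleaner route: set $C_t = \int_0^t 1_{\{X_s = 0\}}\,dA_s$ and $V_t = \int_0^t 1_{\{X_s\neq 0\}}\,dA_s$; then $dC$ is trivially carried by $\{X_s=0\}$, and $dV = 1_{\{X_s\neq 0\}}\,dA$ is carried by $\{X_s\neq 0\}\cap\{X_s X_{s-}=0\}\subset\{X_{s-}=0\}$, so $\int_0^t 1_{\{X_{s-}\neq 0\}}\,dV_s = 0$. This symmetric choice makes both conditions immediate from $dA$ being carried by $\{X_s X_{s-}=0\}$, and is the decomposition I would present.

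\textbf{Main obstacle.} The only subtlety is verifying that $C$ and $V$ so defined are genuinely predictable (not merely optional) processes of finite variation with the normalization $C_{0-}=C_0 = V_{0-}=V_0 = 0$. Predictability follows because $\{X_s = 0\}$ and $\{X_s\neq 0\}$ need not be predictable sets in general, so one should instead use that $1_{\{X_{s-}=0\}}$ and $1_{\{X_{s-}\neq 0\}}$ are left-continuous hence predictable, and exploit that $dA$-almost everywhere $\{X_s=0\}$ and $\{X_{s-}=0\}$ agree off a countable set, reducing the measurability question to the jump times where predictability of $A$ forces $\Delta A$ to be $\mathcal F_{s-}$-measurable. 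I would state this reduction carefully and then conclude.
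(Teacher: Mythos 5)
Your final decomposition $C_t=\int_0^t 1_{\{X_s=0\}}\,dA_s$, $V_t=\int_0^t 1_{\{X_s\neq 0\}}\,dA_s$ is exactly the one the paper uses (there $V$ is written as $\int_0^t 1_{\{X_s\neq 0=X_{s-}\}}\,dA_s$, which coincides with yours $dA$-a.e.\ because $dA$ is carried by $\{X_sX_{s-}=0\}$), and the converse direction is argued identically, so you were right to abandon the first, $X_{s-}$-based splitting. The predictability of $C$ and $V$ that you flag as the remaining obstacle is passed over in silence in the paper as well, so your argument is essentially the published proof.
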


\begin{proof}
$(1)\Rightarrow(2)$ We can see that, for all $t\geq0$,
$$A_{t}=\int_{0}^{t}{dA_{s}}=\int_{0}^{t}{1_{\{X_{s}=0\}}dA_{s}}+\int_{0}^{t}{1_{\{X_{s}\neq0\}}dA_{s}}$$
$$\hspace{3cm}=\int_{0}^{t}{1_{\{X_{s}=0\}}dA_{s}}+\int_{0}^{t}{1_{\{X_{s}X_{s-}\neq0\}}dA_{s}}+\int_{0}^{t}{1_{\{X_{s}\neq0=X_{s-}\}}dA_{s}}.$$
However,
$$\int_{0}^{t}{1_{\{X_{s}X_{s-}\neq0\}}dA_{s}}=0$$
as $dA_{s}$ is carried by $\{X_{s}X_{s-}=0\}$. Hence, it entails the following:
$$A_{t}=\int_{0}^{t}{1_{\{X_{s}=0\}}dA_{s}}+\int_{0}^{t}{1_{\{X_{s}\neq0=X_{s-}\}}dA_{s}}.$$
Now, let us substitute $C_{t}=\int_{0}^{t}{1_{\{X_{s}=0\}}dA_{s}}$ and $V_{t}=\int_{0}^{t}{1_{\{X_{s}\neq0=X_{s-}\}}dA_{s}}$. 
Thus, we obtain $\forall t\geq0$,
$$\int_{0}^{t}{1_{\{X_{s}\neq0\}}dC_{s}}=\int_{0}^{t}{1_{\{X_{s}\neq0\}}1_{\{X_{s}=0\}}dA_{s}}=0$$
and
$$\int_{0}^{t}{1_{\{X_{s-}\neq0\}}dV_{s}}=\int_{0}^{t}{1_{\{X_{s-}\neq0\}}1_{\{X_{s}\neq0=X_{s-}\}}dA_{s}}=0.$$
$(2)\Rightarrow(1)$\\
Now, assume that $A=C+V$ with $\int_{0}^{t}{1_{\{X_{s-}\neq0\}}dV_{s}}=\int_{0}^{t}{1_{\{X_{s}\neq0\}}dC_{s}}=0$. One has $\forall t\geq0$,
$$\int_{0}^{t}{1_{\{X_{s}X_{s-}\neq0\}}dA_{s}}=\int_{0}^{t}{1_{\{X_{s}X_{s-}\neq0\}}dC_{s}}+\int_{0}^{t}{1_{\{X_{s}X_{s-}\neq0\}}dV_{s}}.$$
However,
$$\int_{0}^{t}{1_{\{X_{s}X_{s-}\neq0\}}dC_{s}}=\int_{0}^{t}{1_{\{X_{s-}\neq0\}}1_{\{X_{s}\neq0\}}dC_{s}}=0\text{, since }1_{\{X_{s}\neq0\}}dC_{s}\equiv0$$
and
$$\int_{0}^{t}{1_{\{X_{s}X_{s-}\neq0\}}dV_{s}}=\int_{0}^{t}{1_{\{X_{s}\neq0\}}1_{\{X_{s-}\neq0\}}dV_{s}}=0\text{, since }1_{\{X_{s-}\neq0\}}dV_{s}\equiv0.$$
This completes the proof.
\end{proof}

As an application of Theorem \ref{t2}, we present two corollaries that provide a new approach to characterize the classes $(\Sigma)$ and $(\Sigma^{r})$.
\begin{coro}\label{c1}
Let $X=M+A$ be a càdlàg semi-martingale. Then, the following are equivalent:
\begin{enumerate}
	\item $X\in (\Sigma)$;
	\item there exist a continuous finite variation process $V$ and a càdlàg predictable process $C$ such that $A=C+V$ and 
	$$\int_{0}^{t}{1_{\{X_{s}\neq0\}}dC_{s}}=\int_{0}^{t}{1_{\{X_{s-}\neq0\}}dV_{s}}=0.$$
\end{enumerate}
\end{coro}
\begin{proof}
$(1)\Rightarrow(2)$ Assume that $X$ is an element of the class $(\Sigma)$. Hence, it follows from Definition \ref{d1} that there exists a local martingale $M$ and a càdlàg, predictable process $A$ such that $\forall t\geq0$, $dA_{t}$ is carried by $\{X_{t}=0\}$ and $X=M+A$. It is evident that (2) yields by taking $C=A$ and $V\equiv0$.\\
$(2)\Rightarrow(1)$ Now, assume that Assertion $(2)$ is true. We have $\forall t\geq0$,
	$$\int_{0}^{t}{1_{\{X_{s}\neq0\}}dA_{s}}=\int_{0}^{t}{1_{\{X_{s}\neq0\}}dC_{s}}+\int_{0}^{t}{1_{\{X_{s}\neq0\}}dV_{s}}.$$
	However, 
	$$\int_{0}^{t}{1_{\{X_{s}\neq0\}}dC_{s}}=0$$
	as $dC_{s}$ is carried by $\{X_{s}\neq0\}$. Hence,
	$$\int_{0}^{t}{1_{\{X_{s}\neq0\}}dA_{s}}=\int_{0}^{t}{1_{\{X_{s}\neq0\}}dV_{s}}.$$
	Furthermore,
	$$\int_{0}^{t}{1_{\{X_{s}\neq0\}}dV_{s}}=\int_{0}^{t}{1_{\{X_{s-}\neq0\}}dV_{s}}$$
	because $V$ is continuous. Therefor,
	$$\int_{0}^{t}{1_{\{X_{s}\neq0\}}dA_{s}}=\int_{0}^{t}{1_{\{X_{s-}\neq0\}}dV_{s}}=0.$$
	Consequently, $X$ is an element of the class $(\Sigma)$. This completes the proof.
\end{proof}

\begin{coro}\label{c2}
Let $X=M+A$ be a càdlàg stochastic process. Then, the following are equivalent:
\begin{enumerate}
	\item $X\in (\Sigma^{r})$;
	\item there exist a continuous finite variation process $C$ and a càdlàg predictable process $V$ such that $A=C+V$ and 
	$$\int_{0}^{t}{1_{\{X_{s}\neq0\}}dC_{s}}=\int_{0}^{t}{1_{\{X_{s-}\neq0\}}dV_{s}}=0.$$
\end{enumerate}
\end{coro}
\begin{proof}
$(1)\Rightarrow(2)$ Assume that $X$ is an element of class $(\Sigma^{r})$. Hence, there exist a local martingale $M$ and a càdlàg predictable process $A$ such that $\forall t\geq0$, $dA_{t}$ is carried by $\{X_{t-}=0\}$ and $X=M+A$. It is clear that (2) yields by taking $V=A$ and $C\equiv0$.\\
$(2)\Rightarrow(1)$ Now, assume that Assertion $(2)$ is true. We have $\forall t\geq0$,
	$$\int_{0}^{t}{1_{\{X_{s-}\neq0\}}dA_{s}}=\int_{0}^{t}{1_{\{X_{s-}\neq0\}}dC_{s}}+\int_{0}^{t}{1_{\{X_{s-}\neq0\}}dV_{s}}.$$
	However, 
	$$\int_{0}^{t}{1_{\{X_{s-}\neq0\}}dV_{s}}=0$$
	as $dV_{s}$ is carried by $\{X_{s-}\neq0\}$. Hence,
	$$\int_{0}^{t}{1_{\{X_{s-}\neq0\}}dA_{s}}=\int_{0}^{t}{1_{\{X_{s-}\neq0\}}dC_{s}}.$$
	Furthermore,
	$$\int_{0}^{t}{1_{\{X_{s-}\neq0\}}dC_{s}}=\int_{0}^{t}{1_{\{X_{s}\neq0\}}dC_{s}}$$
	because $C$ is continuous. Therefor,
	$$\int_{0}^{t}{1_{\{X_{s-}\neq0\}}dA_{s}}=\int_{0}^{t}{1_{\{X_{s}\neq0\}}dC_{s}}=0.$$
	Consequently, $X$ is an element of class $(\Sigma^{r})$. This completes the proof.

\end{proof}

Now, we explore some general properties of the stochastic processes of the class $(\Sigma^{g})$. Hence, we begin by deriving the properties using the balayage formulas:
\begin{lem}\label{l1}
Let $X$ be a process of class $(\Sigma^{g})$, and let $\gamma_{t}=\sup\{s\leq t:X_{s}=0\}$. Then, for any bounded predictable process $k$, $k_{\gamma_{\cdot}}X$ is also an element of class $(\Sigma^{g})$.
\end{lem}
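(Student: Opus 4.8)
The plan is to apply the balayage formula in the progressive case (in the form used in \cite{mey}) to the semimartingale $X=M+A$ and the honest time $\gamma_{t}=\sup\{s\leq t:X_{s}=0\}$, then to read off the canonical decomposition of $Y:=k_{\gamma_{\cdot}}X$ and check that its finite variation part is carried by the set $\{s:Y_{s}Y_{s-}=0\}$.

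First I would record the balayage formula in the following form: for $X$ c\`adl\`ag with $X_{0}=0$ and $k$ a bounded process of finite variation,
$$k_{\gamma_{t}}X_{t}=\int_{0}^{t}k_{\gamma_{s}}\,dX_{s}+\int_{0}^{t}X_{s-}\,dk_{\gamma_{s}},\qquad t\geq0 .$$
This follows from integration by parts together with the identities $[k_{\gamma_{\cdot}},X]_{t}=\sum_{s\leq t}\Delta k_{\gamma_{s}}\Delta X_{s}$ and $\int_{0}^{t}k_{\gamma_{s-}}\,dX_{s}=\int_{0}^{t}k_{\gamma_{s}}\,dX_{s}-\sum_{s\leq t}\Delta k_{\gamma_{s}}\Delta X_{s}$; the general bounded predictable case is then obtained by a monotone class argument starting from $k$ of the form $H\,1_{(u,\infty)}$ with $H$ bounded and $\mathcal{F}_{u}$-measurable. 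Splitting $dX=dM+dA$, this gives $Y=N+B$ with
$$N_{t}=\int_{0}^{t}k_{\gamma_{s}}\,dM_{s},\qquad B_{t}=\int_{0}^{t}k_{\gamma_{s}}\,dA_{s}+\int_{0}^{t}X_{s-}\,dk_{\gamma_{s}} .$$
Since $k$ is bounded, $N$ is a local martingale with $N_{0}=0$, and $B$ is a predictable process of finite variation with $B_{0-}=B_{0}=0$; so it remains only to verify that $dB$ is carried by $\{s:Y_{s}Y_{s-}=0\}$.

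For this I would use two observations. First, $Y_{s}=k_{\gamma_{s}}X_{s}$ vanishes on $\{X_{s}=0\}$ and $Y_{s-}=(k_{\gamma_{\cdot}})_{s-}X_{s-}$ vanishes on $\{X_{s-}=0\}$, because the product of two c\`adl\`ag processes has left limits equal to the product of the left limits; hence $\{X_{s}=0\}\cup\{X_{s-}=0\}\subseteq\{Y_{s}Y_{s-}=0\}$. Second, $\gamma$ is non-decreasing and, $X$ being right-continuous, one checks that $d\gamma$ — and therefore $dk_{\gamma_{\cdot}}$ — is carried by $\overline{\{X=0\}}$, while $\overline{\{X=0\}}\setminus\{X=0\}\subseteq\{X_{-}=0\}$ (a time approached by zeros of $X$ from the right is itself a zero, by right-continuity); thus $dk_{\gamma_{\cdot}}$ is carried by $\{X_{s}=0\}\cup\{X_{s-}=0\}$. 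Since $dA$ is carried by $\{X_{s}X_{s-}=0\}=\{X_{s}=0\}\cup\{X_{s-}=0\}$ by Definition \ref{d1}, both $k_{\gamma_{s}}\,dA_{s}$ and $X_{s-}\,dk_{\gamma_{s}}$ are carried by $\{X_{s}=0\}\cup\{X_{s-}=0\}\subseteq\{Y_{s}Y_{s-}=0\}$, whence $\int_{0}^{t}1_{\{Y_{s}Y_{s-}\neq0\}}\,dB_{s}=0$ and $Y\in(\Sigma^{g})$.

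I expect the main obstacle to be the correct treatment of the process $k_{\gamma_{\cdot}}$: one must justify that it is an admissible integrand, so that $N$ is genuinely a local martingale, and — more delicately — that the finite variation part $B$ is predictable, the subtlety being that $\gamma$ is an optional but in general not a predictable time. As is standard for the balayage formula, this is handled by first establishing everything for $k=H\,1_{(u,\infty)}$, where $k_{\gamma_{\cdot}}$ is an explicit indicator of a stochastic interval, and then passing to arbitrary bounded predictable $k$ by the monotone class theorem. A secondary point requiring care is the description of the support of $d\gamma$, since $\{X=0\}$ need not be closed for a c\`adl\`ag process; this is dealt with via the inclusion $\overline{\{X=0\}}\setminus\{X=0\}\subseteq\{X_{-}=0\}$ recorded above.
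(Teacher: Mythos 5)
Your argument is correct and follows the same basic route as the paper: apply a balayage-type identity to $k_{\gamma_\cdot}X$, read off the local-martingale and finite-variation parts, and check that the latter is carried by $\{s:Y_sY_{s-}=0\}$ for $Y=k_{\gamma_\cdot}X$. The differences are worth noting. The paper invokes the clean identity $k_{\gamma_t}X_t=k_{\gamma_0}X_0+\int_0^tk_{\gamma_s}\,dX_s$ and then splits $A=C+V$ as in Theorem \ref{t2}, observing that $k_{\gamma_t}\,dC_t$ is carried by $\{k_{\gamma_t}X_t=0\}$ and $k_{\gamma_t}\,dV_t$ by $\{k_{\gamma_{t-}}X_{t-}=0\}$. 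You instead keep the integration-by-parts correction $\int_0^tX_{s-}\,dk_{\gamma_s}$, and this is actually the more accurate form in the c\`adl\`ag setting: testing with $k=1_{(u,\infty)}$ gives $k_{\gamma_t}X_t=\int_0^tk_{\gamma_s}\,dX_s+X_{d_u-}1_{\{t\geq d_u\}}$ with $d_u=\inf\{s>u:X_s=0\}$, so the extra term need not vanish when $X$ jumps to zero from a nonzero left limit. Your verification that this term is harmless --- $dk_{\gamma_\cdot}$ is carried by $\overline{\{X=0\}}\subseteq\{X=0\}\cup\{X_-=0\}$, which sits inside $\{Y Y_-=0\}$ --- is exactly what is needed, and it also lets you bypass the $C+V$ decomposition entirely by using directly that $dA$ is carried by $\{X_sX_{s-}=0\}$. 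In short, your proof buys a more robust treatment of the jump term (and flags the predictability/integrand issues for $k_{\gamma_\cdot}$ that the paper passes over), at the cost of having to establish the balayage identity by hand via the monotone class argument rather than citing it.
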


\begin{proof}
By applying the balayage formula for the càdlàg case, we obtain the following:
$$k_{\gamma_{t}}X_{t}=k_{\gamma_{0}}X_{0}+\int_{0}^{t}{k_{\gamma_{s}}dX_{s}}=\int_{0}^{t}{k_{\gamma_{s}}dM_{s}}+\int_{0}^{t}{k_{\gamma_{s}}dC_{s}}+\int_{0}^{t}{k_{\gamma_{s}}dV_{s}}.$$
It is clear that $\int_{0}^{\cdot}{k_{\gamma_{s}}dM_{s}}$ is a local martingale; furthermore, $k_{\gt}dC_{t}$ is carried by $\{t\geq0:k_{\gamma_{t}}X_{t}=0\}$ and $k_{\gt}dV_{t}$ is carried by $\{t\geq0:k_{\gamma_{t-}}X_{t-}=0\}$. This completes the proof.
\end{proof}

\begin{coro}\label{c3}
Let $X=M+C+V=M+A$ be a process of class $(\Sigma^{g})$, and let $f$ be a bounded Borel function. Then, the process $\left(f(C_{t})X_{t}:t\geq0\right)$ is an element of the class $(\Sigma^{g})$, and its finite variation part is defined by $\forall t\geq0$, $A^{'}_{t}=\int_{0}^{t}{f(C_{s})d(C_{s}+V_{s})}$.
\end{coro}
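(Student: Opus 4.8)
The plan is to recognize $\bigl(f(C_t)X_t : t\ge 0\bigr)$ as a special case of the process $k_{\gamma_\cdot}X$ from Lemma \ref{l1}, for the choice $k_s := f(C_s)$, where $\gamma_t=\sup\{s\le t:X_s=0\}$. Since $X\in(\Sigma^g)$, Theorem \ref{t2} furnishes predictable processes $C$ and $V$ with $A=C+V$ such that $dC$ is carried by $\{s:X_s=0\}$ and $dV$ by $\{s:X_{s-}=0\}$; the process $k=f(C)$ is then bounded (as $f$ is bounded) and predictable (being the composition of a Borel function with a predictable process), so it is an admissible integrand to which Lemma \ref{l1} applies.

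The key step I would carry out is to show that $C_t=C_{\gamma_t}$ for every $t\ge0$. Since $X_0=M_0+A_0=0$, the set $\{s\le t:X_s=0\}$ is nonempty, so $\gamma_t$ is well defined, and by the definition of $\gamma_t$ one has $X_s\ne0$ for every $s\in(\gamma_t,t]$; as $dC$ lives on $\{X=0\}$, it charges no point of $(\gamma_t,t]$, whence $C_t-C_{\gamma_t}=0$. This gives the identity of processes $k_{\gamma_s}=f(C_{\gamma_s})=f(C_s)$, and in particular $k_{\gamma_t}X_t=f(C_t)X_t$; Lemma \ref{l1} then immediately yields $\bigl(f(C_t)X_t:t\ge0\bigr)\in(\Sigma^g)$. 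To identify the finite variation part I would re-run the balayage computation from the proof of Lemma \ref{l1}: the c\`adl\`ag balayage formula together with $X_0=0$ gives
$$f(C_t)X_t=k_{\gamma_t}X_t=\int_0^t k_{\gamma_s}\,dM_s+\int_0^t k_{\gamma_s}\,dC_s+\int_0^t k_{\gamma_s}\,dV_s,$$
and replacing each $k_{\gamma_s}$ by $f(C_s)$ (legitimate by the previous sentence) exhibits $\int_0^\cdot f(C_s)\,dM_s$ as the local martingale part and $A'_t=\int_0^t f(C_s)\,dC_s+\int_0^t f(C_s)\,dV_s=\int_0^t f(C_s)\,d(C_s+V_s)$ as the finite variation part, which is the asserted formula.

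The only genuinely substantive point is the identity $C_t=C_{\gamma_t}$ --- equivalently, that $f(C_\cdot)$ is already ``frozen at the last zero of $X$'', so that multiplying $X$ by $f(C_\cdot)$ performs exactly the same operation as the balayage multiplication in Lemma \ref{l1}. This is immediate from the fact that the measure $dC$ is carried by $\{X=0\}$, so no real obstacle remains: once this is observed, the corollary is a direct consequence of Lemma \ref{l1} together with the bookkeeping already done in its proof. A secondary, entirely routine verification is that $f(C)$ qualifies as a bounded predictable integrand.
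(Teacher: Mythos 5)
Your proposal is correct and follows essentially the same route as the paper: both apply the balayage Lemma \ref{l1} with $k=f(C)$, use the fact that $dC$ is carried by $\{X=0\}$ to get $C_{\gamma_t}=C_t$, and then read off the finite variation part $A'_t=\int_0^t f(C_s)\,d(C_s+V_s)$ from the balayage identity. Your write-up is in fact slightly more careful than the paper's, since you justify the identity $C_t=C_{\gamma_t}$ and the admissibility of $f(C)$ as a bounded predictable integrand explicitly.
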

\begin{proof}
According to Lemma \ref{l1}, $(f(C_{\gamma_{t}})X_{t}:t\geq0)$ is an element of the class $(\Sigma^{g})$. Furthermore, we have $\forall t\geq0$,
$$f(C_{\gamma_{t}})X_{t}=\int_{0}^{t}{f(C_{\gamma_{s}})dM_{s}}+\int_{0}^{t}{f(C_{\gamma_{s}})dA_{s}}.$$
As $dC_{t}$ is carried by $\{X_{t}=0\}$, we have $\forall t\geq0$, $C_{\gamma_{t}}= C_{t}$. Consequently, $\forall t\geq0$,
$$f(C_{t})X_{t}=\int_{0}^{t}{f(C_{s})dM_{s}}+\int_{0}^{t}{f(C_{s})dA_{s}}.$$
This completes the proof.
\end{proof}

\begin{coro}
Let $X=M+C+V$ be a positive process of the class $(\Sigma^{g})$. Then, there exist a càdlàg non-decreasing predictable process $\Gamma$ satisfying $Supp{(d\Gamma_{t})}\subset\{X_{t}=0\}$ and a positive submartingale $W=m+l$ with $W_{0}=1$; the measure $dl_{t}$ is carried by $\{X_{t-}=0\}$ such that $\forall t\geq0$,
$$X_{t}=\Gamma_{t}W_{t}-1.$$
\end{coro}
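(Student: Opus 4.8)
The plan is to adapt Nikeghbali's multiplicative decomposition argument (Theorem in \cite{mult}, and its extension in \cite{fjo}) to the setting of class $(\Sigma^{g})$. Since $X=M+C+V$ is a positive process of class $(\Sigma^{g})$, Theorem \ref{t2} gives us the splitting $A=C+V$ with $dC$ carried by $\{X_{t}=0\}$ and $dV$ carried by $\{X_{t-}=0\}$, so we may work separately with the two parts. The natural candidate for $\Gamma$ is built from $C$ alone. First I would apply Corollary \ref{c3}: for a suitable bounded Borel function $f$, the process $f(C_{t})X_{t}$ is again in $(\Sigma^{g})$ with finite-variation part $\int_{0}^{t}f(C_{s})d(C_{s}+V_{s})$. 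Choosing $f(x)=(1+x)^{-2}$ (or the analogous choice yielding a telescoping/It\^o cancellation), the idea is that $Y_{t}:=(1+X_{t})(1+C_{t})^{-1}$ should turn out to be a positive local martingale: integration by parts together with $dC$ being carried by $\{X=0\}$ should kill the finite-variation terms, exactly as in the continuous case. One then sets $W:=Y$ (so $W_{0}=(1+X_{0})(1+C_{0})^{-1}=1$ using $X_{0}=C_{0}=0$... more precisely $X_0=0$ since $M_0=A_0=0$) and $\Gamma_{t}:=1+C_{t}$, giving $X_{t}=\Gamma_{t}W_{t}-1$.

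The steps, in order: (i) record from Theorem \ref{t2} the decomposition $A=C+V$ and the support properties of $dC$ and $dV$; (ii) set $\Gamma_{t}=1+C_{t}$, which is c\`adl\`ag, non-decreasing (since $C$ is non-decreasing — this needs $X\geq0$, as in \cite{nik,mult}, to force $dC\geq0$), predictable, with $\mathrm{Supp}(d\Gamma)=\mathrm{Supp}(dC)\subset\{X_{t}=0\}$; (iii) define $W_{t}=(1+X_{t})/\Gamma_{t}$ and compute $dW_{t}$ by integration by parts (or It\^o's formula for the c\`adl\`ag semimartingale $(1+X,\Gamma)$), using $\Gamma^{-1}$ as the integrand; (iv) check that every $dC$-term vanishes because $dC$ lives on $\{X=0\}$, where $1+X=1$, and reorganize the remaining $dV$-terms to exhibit $W$ as $m+l$ with $m$ a local martingale and $dl$ carried by $\{X_{t-}=0\}$; (v) verify $W\geq0$ (immediate from $X\geq0$, $\Gamma\geq1>0$) and $W_{0}=1$; (vi) confirm $W$ is a submartingale, which follows once $l$ is shown to be non-decreasing — again a consequence of positivity of $X$ forcing the relevant measure to be non-negative.

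The main obstacle I expect is step (iv): handling the jump terms of $V$ correctly. In the purely continuous case ($C,V$ continuous) the integration by parts is clean, but here both $C$ and $V$ may jump, and the jumps of $X$ interact with $\Delta\Gamma$ and $\Delta V$ in the It\^o formula for a product of c\`adl\`ag semimartingales. One must be careful that the bracket term $d[1+X,\Gamma^{-1}]$ and the discrete corrections (of the form $\Delta(\Gamma^{-1})\Delta X$, etc.) either vanish on $\{X=0\}$ or can be absorbed into $l$ while preserving that $dl$ is carried by $\{X_{t-}=0\}$ and that $l$ is non-decreasing. A clean way around this is to first prove the claim for $V\equiv0$ (so $X$ is of class $(\Sigma)$, recovering the \cite{fjo} statement) and then treat the general case by noting that on the support of $dV$ one has $X_{t-}=0$, so $\Gamma_{t-}W_{t-}=1$, which should pin down the jump contributions; alternatively, one applies Lemma \ref{l1}/Corollary \ref{c3} with the explicit function $f$ and reads off the decomposition directly from the formula for the finite-variation part, avoiding a hands-on It\^o computation altogether.
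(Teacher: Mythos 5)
Your overall strategy --- split $A=C+V$ as in Theorem \ref{t2}, build $\Gamma$ from $C$ alone via Corollary \ref{c3} with a suitable bounded Borel $f$, kill the $dC$-terms using $\mathrm{Supp}(dC)\subset\{X_t=0\}$, and push the $dV$-part into the increasing part $l$ of $W$ --- is exactly the route the paper takes. The gap is in the one ingredient you left unspecified: the choice of $f$, equivalently of $\Gamma$. With $\Gamma_t=1+C_t$ the cancellation you are counting on does not occur. Writing $g(x)=(1+x)^{-1}$ and computing $d\left(g(C_t)(1+X_t)\right)$ by integration by parts, the total $dC$-contribution is $g(C_t)\,dC_t+(1+X_t)\,g'(C_t)\,dC_t=\left(g(C_t)+g'(C_t)\right)dC_t$ after using that $X=0$ on the support of $dC$; for $g(x)=(1+x)^{-1}$ this equals $\frac{C_t}{(1+C_t)^2}\,dC_t$, a nontrivial non-decreasing process carried by $\{X_t=0\}$ and \emph{not} by $\{X_{t-}=0\}$. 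So your $W=(1+X)/(1+C)$ is still a positive submartingale, but its increasing part contains a piece supported on $\{X_t=0\}$, and the assertion that $dl_t$ is carried by $\{X_{t-}=0\}$ fails; likewise your intermediate claim that $(1+X)(1+C)^{-1}$ is a local martingale when $V\equiv0$ is false. Requiring the $dC$-terms to vanish forces $g+g'=0$, i.e.\ $g(x)=e^{-x}$. This is precisely the paper's choice: $f(x)=e^{-x}$ in Corollary \ref{c3}, $\Gamma_t=e^{C_t}$, and $W_t=e^{-C_t}(1+X_t)=1+\int_0^t e^{-C_s}\,dM_s+\int_0^t e^{-C_s}\,dV_s$, for which $m_t=\int_0^t e^{-C_s}\,dM_s$ is a local martingale and $l_t=\int_0^t e^{-C_s}\,dV_s$ is carried by $\{X_{t-}=0\}$ as required.

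Your concern about the jump terms is legitimate but secondary to the above: once $f(x)=e^{-x}$ is fed into Corollary \ref{c3} (which already encapsulates the integration by parts), the only remaining step is the identification $\int_0^t e^{-C_s}\,dC_s=1-e^{-C_t}$, and it is there --- not in the support property of $dl$ --- that the jumps of $C$ require bookkeeping, since for a discontinuous $C$ one has $d(e^{-C_s})=-e^{-C_{s-}}\,dC_s$ only up to the correction $e^{-C_{s-}}(e^{-\Delta C_s}-1+\Delta C_s)$. The paper's own proof passes over this point; your instinct to isolate the jump contributions of $C$ and $V$ separately is sound, but it cannot rescue the choice $\Gamma=1+C$.
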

\begin{proof}
As the function $f$, defined by  $f(x)=e^{-x}$, is a bounded Borel function on $[0,+\infty[$, it follows from Corollary \ref{c3} that
$$f(C_{t})X_{t}-\int_{0}^{t}{f(C_{s})dC_{s}}=\int_{0}^{t}{f(C_{s})dM_{s}}+\int_{0}^{t}{f(C_{s})dV_{s}}.$$
Hence, we obtain that, $\forall t\geq0$,
$$e^{-C_{t}}(X_{t}+1)-1=\int_{0}^{t}{e^{-C_{s}}dM_{s}}+\int_{0}^{t}{e^{-C_{s}}dV_{s}}.$$
Therefore, considering $W_{t}=1+\int_{0}^{t}{e^{-C_{s}}dM_{s}}+\int_{0}^{t}{e^{-C_{s}}dV_{s}}$, we get
\begin{equation}\label{e1}
	e^{-C_{t}}(X_{t}+1)=W_{t}.
\end{equation}
Consequently,
$$X_{t}=\Gamma_{t}W_{t}-1,$$
where $\Gamma_{t}=e^{C_{t}}$. It is evident from \eqref{e1} that $W$ is a positive submartingale with $W_{0}=1$, and its non-decreasing part $l_{t}=\int_{0}^{t}{e^{-C_{s}}dV_{s}}$ is such that $Supp{(dl_{t})}\subset\{X_{t-}=0\}$.
\end{proof}

Now, we study the negative and positive parts of the stochastic processes of the class $(\Sigma^{g})$.
\begin{lem}\label{l3}
Let $X=M+A=M+C+V$ be a process of class $(\Sigma^{g})$. The following hold:
\begin{enumerate}
	\item If $C$ is a non-decreasing process, then $X^{+}$ is a local submartingale. 
	\item If $C$ is a decreasing process, then $X^{-}$ is a local submartingale.
	\item If $C$ has no negative jump and $\int_{0}^{t}{1_{\{X_{s}\neq0\}}dC^{c}_{s}}=0$, then $X^{+}$ is a local submartingale.
	\item If $C$ has no positive jump and $\int_{0}^{t}{1_{\{X_{s}\neq0\}}dC^{c}_{s}}=0$, then $X^{-}$ is a local submartingale.
\end{enumerate}
\end{lem}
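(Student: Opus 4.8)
The plan is to treat every item by one application of the Tanaka--Meyer (Meyer--It\^o) formula for the convex function $x\mapsto x^{+}$, reading the conclusion off the decomposition $X=M+C+V$ supplied by Theorem \ref{t2}, for which $dC$ is carried by $\{s:X_{s}=0\}$ and $dV$ by $\{s:X_{s-}=0\}$. Fixing the convention that the left derivative of $x\mapsto x^{+}$ is $1_{\{x>0\}}$, the formula reads
$$X^{+}_{t}=X^{+}_{0}+\int_{0}^{t}1_{\{X_{s-}>0\}}\,dX_{s}+\frac{1}{2}L^{0}_{t}+\sum_{0<s\leq t}\Big(X^{+}_{s}-X^{+}_{s-}-1_{\{X_{s-}>0\}}\Delta X_{s}\Big),$$
where $L^{0}$ is the (continuous, non-decreasing) local time of $X$ at $0$. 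Of the last three terms, two are already of the good type: $\tfrac{1}{2}L^{0}$ is non-decreasing, and each summand $X^{+}_{s}-X^{+}_{s-}-1_{\{X_{s-}>0\}}\Delta X_{s}$ is $\geq0$, since $1_{\{X_{s-}>0\}}$ is a subgradient of $x\mapsto x^{+}$ at $X_{s-}$ (including at $X_{s-}=0$, where $0\in[0,1]$); hence the jump sum defines a non-decreasing process.

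Next I would substitute $dX_{s}=dM_{s}+dC_{s}+dV_{s}$ into the stochastic integral. The part $\int_{0}^{t}1_{\{X_{s-}>0\}}\,dM_{s}$ is a local martingale, and $\int_{0}^{t}1_{\{X_{s-}>0\}}\,dV_{s}=0$ because $\{X_{s-}>0\}$ is disjoint from $\{X_{s-}=0\}$ and $dV$ is carried by the latter. So
$$X^{+}_{t}=X^{+}_{0}+\Big(\text{local martingale}\Big)+\int_{0}^{t}1_{\{X_{s-}>0\}}\,dC_{s}+\frac{1}{2}L^{0}_{t}+\sum_{0<s\leq t}\Big(X^{+}_{s}-X^{+}_{s-}-1_{\{X_{s-}>0\}}\Delta X_{s}\Big),$$
and the entire question reduces to showing that the process $t\mapsto\int_{0}^{t}1_{\{X_{s-}>0\}}\,dC_{s}$ is non-decreasing (possibly after absorbing it into the jump sum).

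For (1): when $C$ is non-decreasing, $dC\geq0$, hence $t\mapsto\int_{0}^{t}1_{\{X_{s-}>0\}}\,dC_{s}$ is non-decreasing; together with the two non-decreasing terms above, $X^{+}$ is a local martingale plus a non-decreasing adapted process starting from $X^{+}_{0}\geq0$, i.e.\ a local submartingale. For (3): split $C=C^{c}+C^{d}$ with $C^{c}$ its continuous part and $C^{d}_{t}=\sum_{u\leq t}\Delta C_{u}$ its pure-jump part. From $\int_{0}^{t}1_{\{X_{s}\neq0\}}\,dC^{c}_{s}=0$ for all $t$ one gets that the signed measure $dC^{c}$ is carried by $\{X_{s}=0\}$; since $dC^{c}$ is atomless while $\{s:X_{s-}\neq X_{s}\}$ is countable, $\int_{0}^{t}1_{\{X_{s-}>0\}}\,dC^{c}_{s}=\int_{0}^{t}1_{\{X_{s-}>0,\,X_{s}=0\}}\,dC^{c}_{s}=0$. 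On the other hand $\int_{0}^{t}1_{\{X_{s-}>0\}}\,dC^{d}_{s}=\sum_{0<s\leq t}1_{\{X_{s-}>0\}}\Delta C_{s}\geq0$ because $C$ has no negative jump, and this is non-decreasing; one concludes as in (1). Finally (2) and (4) follow by applying (1) and (3) to $-X=(-M)+(-C)+(-V)$: one has $X^{-}=(-X)^{+}$, the $V$-part of this decomposition is still carried by $\{(-X)_{s-}=0\}=\{X_{s-}=0\}$, and $-C$ is non-decreasing (resp.\ has no negative jump and satisfies $\int_{0}^{t}1_{\{(-X)_{s}\neq0\}}\,d(-C)^{c}_{s}=0$) precisely when $C$ is decreasing (resp.\ has no positive jump and satisfies $\int_{0}^{t}1_{\{X_{s}\neq0\}}\,dC^{c}_{s}=0$).

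The main obstacle is the bookkeeping around left limits: $dC$ is carried by $\{X_{s}=0\}$, a condition on the \emph{value} of $X$, whereas the Tanaka--Meyer expansion produces the predictable integrand $1_{\{X_{s-}>0\}}$ involving the \emph{left limit}, so a jump of $X$ away from $0$ occurring at a jump time of $C$ must be controlled. This is exactly why the hypotheses of (3)--(4) separate the continuous and jump parts of $C$: for $C^{c}$ one invokes atomlessness to pass from $X_{s-}$ to $X_{s}$, while for $C^{d}$ one uses the prescribed sign of the jumps. A secondary, purely technical point is to pin down the convention in the Tanaka--Meyer formula (left derivative $1_{\{x>0\}}$, continuous non-decreasing local time, separate jump-correction term) and, if one insists on ``local submartingale'' in the strict sense, to add a routine localization making the non-decreasing part integrable up to each stopping time.
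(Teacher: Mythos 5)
Your proposal is correct and follows essentially the same route as the paper: Tanaka's formula for $X^{+}$, substitution of $dX=dM+dC+dV$ with $\int_{0}^{t}1_{\{X_{s-}>0\}}dV_{s}=0$, monotonicity of $\int_{0}^{\cdot}1_{\{X_{s-}>0\}}dC_{s}$ (or of its jump part after killing $dC^{c}$) for items (1) and (3), and passage to $-X$ for (2) and (4). Your explicit justification that $\int_{0}^{t}1_{\{X_{s-}>0\}}dC^{c}_{s}=\int_{0}^{t}1_{\{X_{s}>0\}}dC^{c}_{s}$ via atomlessness of $dC^{c}$ is a welcome refinement of a step the paper states without detail.
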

\begin{proof}
 From Tanaka's formula, we have
	$$X_{t}^{+}=\int_{0}^{t}{1_{\{X_{s-}>0\}}dX_{s}}+\sum_{0<s\leq t}{1_{\{X_{s-}\leq0\}}X_{s}^{+}}+\sum_{0<s\leq t}{1_{\{X_{s-}>0\}}X_{s}^{-}}+\frac{1}{2}L_{t}^{0}.$$
	However,
	$$\int_{0}^{t}{1_{\{X_{s-}>0\}}dX_{s}}=\int_{0}^{t}{1_{\{X_{s-}>0\}}dM_{s}}+\int_{0}^{t}{1_{\{X_{s-}>0\}}dC_{s}}+\int_{0}^{t}{1_{\{X_{s-}>0\}}dV_{s}}.$$
	Hence,
	$$\int_{0}^{t}{1_{\{X_{s-}>0\}}dX_{s}}=\int_{0}^{t}{1_{\{X_{s-}>0\}}dM_{s}}+\int_{0}^{t}{1_{\{X_{s-}>0\}}dC_{s}}$$
	as $\int_{0}^{t}{1_{\{X_{s-}>0\}}dV_{s}}=0$; this is because $dV_{t}$ is carried by $\{X_{t-}=0\}$. Then,
	\begin{equation}\label{e+}
		X_{t}^{+}=\int_{0}^{t}{1_{\{X_{s-}>0\}}dM_{s}}+\int_{0}^{t}{1_{\{X_{s-}>0\}}dC_{s}}+\sum_{0<s\leq t}{1_{\{X_{s-}\leq0\}}X_{s}^{+}}+\sum_{0<s\leq t}{1_{\{X_{s-}>0\}}X_{s}^{-}}+\frac{1}{2}L_{t}^{0}.
	\end{equation}
	Thus, we have the following:
	\begin{enumerate}
		\item  We first remark that 
		$$\left(\int_{0}^{t}{1_{\{X_{s-}>0\}}dC_{s}}+\sum_{0<s\leq t}{1_{\{X_{s-}\leq0\}}X_{s}^{+}}+\sum_{0<s\leq t}{1_{\{X_{s-}>0\}}X_{s}^{-}}+\frac{1}{2}L_{t}^{0};t\geq0\right)$$
		is an increasing process that vanishes at zero, as $C$ is a non-decreasing process. 
		
		Furthermore, $M$ and $\int_{0}^{\cdot}{1_{\{X_{s-}>0\}}dM_{s}}$ are local martingales. Then, $X^{+}$ is a local submartingale. 
		\item Now, for any process of the class $(\Sigma^{g})$, $-X$ is again an element of the class $(\Sigma^{g})$.
		
		Therefore, it follows that $X^{-}=(-X)^{+}$ is a local submartingale when the process $C$ decreases.
		\item We obtain the following from identity \eqref{e+}:
		$$X_{t}^{+}=\int_{0}^{t}{1_{\{X_{s-}>0\}}dM_{s}}+\sum_{0<s\leq t}{1_{\{X_{s-}>0\}}\Delta C_{s}}+\sum_{0<s\leq t}{1_{\{X_{s-}\leq0\}}X_{s}^{+}}+\sum_{0<s\leq t}{1_{\{X_{s-}>0\}}X_{s}^{-}}+\frac{1}{2}L_{t}^{0},$$
		as 
		$$\int_{0}^{t}{1_{\{X_{s-}>0\}}dC_{s}}=\int_{0}^{t}{1_{\{X_{s-}>0\}}dC^{c}_{s}}+\sum_{0<s\leq t}{1_{\{X_{s-}>0\}}\Delta C_{s}}\text{ and }\int_{0}^{t}{1_{\{X_{s-}>0\}}dC^{c}_{s}}=\int_{0}^{t}{1_{\{X_{s}>0\}}dC^{c}_{s}}=0.$$
		Hence, 
		$$\left(\sum_{0<s\leq t}{1_{\{X_{s-}>0\}}\Delta C_{s}}+\sum_{0<s\leq t}{1_{\{X_{s-}\leq0\}}X_{s}^{+}}+\sum_{0<s\leq t}{1_{\{X_{s-}>0\}}X_{s}^{-}}+\frac{1}{2}L_{t}^{0};t\geq0\right)$$
		is an increasing process because $C$ has no negative jump. Consequently, $X^{+}$ is a local submartingale.
	\end{enumerate}
\end{proof}

\begin{rem}
A direct consequence is that any non-negative stochastic process of class $(\Sigma^{g})$ satisfying the assumptions of Lemma \ref{l3} is a submartingale.
\end{rem}
\begin{lem}\label{l4}
Let $X$ be a process of class $(\Sigma^{g})$. Hence, $X^{+}$ and $X^{-}$ are stochastic processes of class $(\Sigma^{g})$.
\end{lem}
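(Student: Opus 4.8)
The plan is to show that $X^{+}$ and $X^{-}$ are both semi-martingales of the form required by Definition \ref{d1}, i.e., that each decomposes as a local martingale (vanishing at $0$) plus an adapted c\`adl\`ag predictable finite-variation process (vanishing at $0$ and at $0-$) whose induced measure is carried by $\{t\geq 0: X^{\pm}_{t}X^{\pm}_{t-}=0\}$. Since the argument for $X^{-}$ follows from that for $X^{+}$ by replacing $X$ with $-X$ (which, as noted in the proof of Lemma \ref{l3}, is again in class $(\Sigma^{g})$ and satisfies $(-X)^{+}=X^{-}$), it suffices to treat $X^{+}$.

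First I would write down, via Tanaka's formula, the decomposition of $X^{+}$ exactly as in the proof of Lemma \ref{l3}:
$$X_{t}^{+}=\int_{0}^{t}{1_{\{X_{s-}>0\}}dM_{s}}+\int_{0}^{t}{1_{\{X_{s-}>0\}}dA_{s}}+\sum_{0<s\leq t}{1_{\{X_{s-}\leq0\}}X_{s}^{+}}+\sum_{0<s\leq t}{1_{\{X_{s-}>0\}}X_{s}^{-}}+\tfrac{1}{2}L_{t}^{0}.$$
The first term is a local martingale (the stochastic integral of a bounded predictable process against $M$), so it will play the role of $M'$. Everything else is of finite variation and must be shown to (a) be predictable (after a suitable rewriting), (b) vanish at $0$ and $0-$, and (c) have its induced measure supported on $\{X^{+}_{t}X^{+}_{t-}=0\}$. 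The key observation for (c): on the event $\{X_{s-}>0\}$ we have $X^{+}_{s-}=X_{s-}>0$, so $dA$ restricted there is carried by $\{X_{s}=0\}$ (because $dA$ is carried by $\{X_{s}X_{s-}=0\}$), hence by $\{X^{+}_{s}=0\}$; the local time $L^{0}$ and the jump sums are, by their defining indicators and the nature of $L^0$, supported where $X^{+}$ or $X^{+}_{-}$ vanishes. So the full finite-variation part $A'$ is carried by $\{X^{+}_{t}=0\}\cup\{X^{+}_{t-}=0\}=\{X^{+}_{t}X^{+}_{t-}=0\}$, which is what class $(\Sigma^{g})$ requires.

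The main obstacle is the predictability of the finite-variation part. The term $\int_{0}^{t}1_{\{X_{s-}>0\}}dA_{s}$ is predictable since $1_{\{X_{s-}>0\}}$ is left-continuous (hence predictable) and $A$ is predictable; the continuous local time $L^{0}$ is adapted and continuous, hence predictable. The delicate pieces are the two jump sums $\sum 1_{\{X_{s-}\le 0\}}X_s^{+}$ and $\sum 1_{\{X_{s-}>0\}}X_s^{-}$, which a priori involve $X_s$ (not $X_{s-}$) and so need not be predictable as written; I would handle them by absorbing the genuinely non-predictable optional jump contributions into the martingale part, or more cleanly by invoking that $X^{+}$ is already known to be a special semi-martingale (being the difference of the local submartingale structure exhibited in Lemma \ref{l3} in the relevant cases, or simply because $|X|\le $ a semimartingale), so it admits a \emph{unique} canonical decomposition $X^{+}=M'+A'$ with $A'$ predictable; one then only needs to verify that this canonical $A'$ is carried by $\{X^{+}X^{+}_{-}=0\}$, and for that it suffices to check that $dA'$ puts no mass on $\{X^{+}_{t}X^{+}_{t-}\ne 0\}=\{X_{t}>0, X_{t-}>0\}$, on which set $X^{+}$ coincides locally with $X$ and hence its predictable finite-variation part coincides with that of $X$, namely $dA$, which already gives no mass there. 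Finally I would record that $A'_{0-}=A'_{0}=0$, completing the verification that $X^{+}\in(\Sigma^{g})$, and then apply the symmetry $X^{-}=(-X)^{+}$ to conclude.
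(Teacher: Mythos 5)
Your opening is the same as the paper's: Tanaka's formula gives
$X^+_t=\int_0^t 1_{\{X_{s-}>0\}}dM_s+\int_0^t 1_{\{X_{s-}>0\}}dC_s+Y_t+Z_t+\tfrac{1}{2}L^0_t$
with $Y_t=\sum_{0<s\le t}1_{\{X_{s-}\le 0\}}X_s^{+}$ and $Z_t=\sum_{0<s\le t}1_{\{X_{s-}>0\}}X_s^{-}$, and you correctly identify that the whole difficulty lies in the two optional jump sums. But you stop short of the step that actually constitutes the proof. The paper checks that $Y$ and $Z$ are locally integrable, invokes Theorem VI.80 of \cite{pot} to produce predictable compensators $V^{Y}$, $V^{Z}$ (so that $Y-V^{Y}$ and $Z-V^{Z}$ are local martingales --- this is your ``absorb into the martingale part''), and then \emph{computes} that $dV^{Y}$ is still carried by $\{X^{+}_{t-}=0\}$ and $dV^{Z}$ by $\{X^{+}_{t}=0\}$, by testing the defining property of the compensator against the relevant indicators; only then does Theorem \ref{t2} apply to the decomposition $X^{+}=M'+\bigl(V^{Z}+\int_0^{\cdot}1_{\{X_{s-}>0\}}dC_s+\tfrac{1}{2}L^{0}\bigr)+V^{Y}$. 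This verification is not a formality: compensating an optional increasing process can move its support entirely (the compensator of a unit jump at a totally inaccessible time is continuous and charges an interval), so the fact that $dY$ and $dZ$ sit on $\{X^{+}_{t-}=0\}$ and $\{X^{+}_{t}=0\}$ does not by itself transfer to $dV^{Y}$ and $dV^{Z}$.

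The substitute you offer in place of that computation --- take the canonical decomposition $X^{+}=M'+A'$ and argue that $dA'$ gives no mass to $\{X_{t}>0,\,X_{t-}>0\}$ because ``$X^{+}$ coincides locally with $X$ there, hence its predictable finite-variation part coincides with $dA$ there'' --- is not a valid deduction. Canonical decompositions and dual predictable projections do not restrict to arbitrary optional random sets: $\{X_{t}>0\}$ is optional but in general not predictable, and the identity $\E[\int H\,dA']=\E[\int H\,dB]$ you would need to transfer the carrying property holds only for predictable integrands $H$. Making this precise is exactly the computation the paper performs (and is delicate even there: the test set $\{X^{+}_{s}\neq 0\}$ used for $dV^{Z}$ is itself only optional). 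In short: right skeleton and right identification of the obstacle, but the core of the lemma --- locating the supports of the compensators of $Y$ and $Z$ --- is missing, and the shortcut proposed in its place does not close the gap.
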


\begin{proof}
Based on Tanaka's formula, we have
	$$X_{t}^{+}=\int_{0}^{t}{1_{\{X_{s-}>0\}}dX_{s}}+\sum_{0<s\leq t}{1_{\{X_{s-}\leq0\}}X_{s}^{+}}+\sum_{0<s\leq t}{1_{\{X_{s-}>0\}}X_{s}^{-}}+\frac{1}{2}L_{t}^{0}.$$
	However, 
	$$\int_{0}^{t}{1_{\{X_{s-}>0\}}dX_{s}}=\int_{0}^{t}{1_{\{X_{s-}>0\}}dM_{s}}+\int_{0}^{t}{1_{\{X_{s-}>0\}}dC_{s}}+\int_{0}^{t}{1_{\{X_{s-}>0\}}dV_{s}}=\int_{0}^{t}{1_{\{X_{s-}>0\}}dM_{s}}+\int_{0}^{t}{1_{\{X_{s-}>0\}}dC_{s}},$$
as $dV_{t}$ is carried by $\{X_{t-}=0\}$. Hence,
	\begin{equation}\label{*}
	X_{t}^{+}=\int_{0}^{t}{1_{\{X_{s-}>0\}}dM_{s}}+\int_{0}^{t}{1_{\{X_{s-}>0\}}dC_{s}}+\sum_{0<s\leq t}{1_{\{X_{s-}\leq0\}}X_{s}^{+}}+\sum_{0<s\leq t}{1_{\{X_{s-}>0\}}X_{s}^{-}}+\frac{1}{2}L_{t}^{0}.
	\end{equation}
Now, let us set $Y_{t}=\sum_{0<s\leq t}{1_{\{X_{s-}\leq0\}}X_{s}^{+}}$ and $Z_{t}=\sum_{0<s\leq t}{1_{\{X_{s-}>0\}}X_{s}^{-}}$. As $M$ and $\int_{0}^{\cdot}{1_{\{X_{s-}>0\}}dM_{s}}$ are local martingales and $C+V$ is a càdlàg, there exists a sequence of stopping times $(T_{n};n\in\N)$ increasing to $\infty$, such that 
	$$E[(X_{T_{n}})^{+}]=E[(M_{T_{n}}+C_{T_{n}}+V_{T_{n}})^{+}]<\infty\text{ and }E\left[\int_{0}^{T_{n}}{1_{\{X_{s-}>0\}}dM_{s}}\right]=0\text{, }n\in\N.$$
	It follows from Equation \eqref{*} that 
	$$E[Y_{T_{n}}]\leq E\left[(X_{T_{n}})^{+}-\int_{0}^{t}{1_{\{X_{s-}>0\}}dC_{s}}\right]<\infty$$
	and 
	$$E[Z_{T_{n}}]\leq E\left[(X_{T_{n}})^{+}-\int_{0}^{t}{1_{\{X_{s-}>0\}}dC_{s}}\right]<\infty$$
	for all $n\in\N$. Thus, based on Theorem VI.80 of \cite{pot}, there exist right continuous increasing predictable processes $V^{Y}$ and $V^{Z}$ such that $Y-V^{Y}$ and $Z-V^{Z}$ are local martingales vanishing at zero. Moreover, there exists a sequence of stopping times $(R_{n};n\in\N)$ increasing to $\infty$, such that
	$$E\left[\int_{0}^{t\wedge R_{n}}{1_{\{X^{+}_{s-}\neq0\}}dV_{s}^{Y}}\right]=E\left[\int_{0}^{t\wedge R_{n}}{1_{\{X^{+}_{s-}\neq0\}}d(V_{s}^{Y}-Y_{s})}+\int_{0}^{t\wedge R_{n}}{1_{\{X^{+}_{s-}\neq0\}}dY_{s}}\right].$$
	As $\int_{0}^{\cdot\wedge R_{n}}{1_{\{X^{+}_{s-}\neq0\}}d(V_{s}^{Y}-Y_{s})}$ is a local martingale, it entails that
	$$E\left[\int_{0}^{t\wedge R_{n}}{1_{\{X^{+}_{s-}\neq0\}}dV_{s}^{Y}}\right]=E\left[\int_{0}^{t\wedge R_{n}}{1_{\{X^{+}_{s-}\neq0\}}dY_{s}}\right].$$
	Therefore,
	$$E\left[\int_{0}^{t\wedge R_{n}}{1_{\{X^{+}_{s-}\neq0\}}dV_{s}^{Y}}\right]=E\left[\sum_{0< s\leq t\wedge R_{n}}{1_{\{X^{+}_{s-}\neq0\}}1_{\{X_{s-}\leq0\}}X_{s}^{+}}\right]=E\left[\sum_{0< s\leq t\wedge R_{n}}{1_{\{X^{+}_{s-}\neq0\}}1_{\{X^{+}_{s-}=0\}}X_{s}^{+}}\right]=0.$$
	In other words, $\int_{0}^{t}{1_{\{X^{+}_{s-}\neq0\}}dV_{s}^{Y}}=0$. Then, $dV_{t}^{Y}$ is carried by $\{X^{+}_{t-}=0\}$. However, we have
	$$E\left[\int_{0}^{t\wedge R_{n}}{1_{\{X^{+}_{s}\neq0\}}dV_{s}^{Z}}\right]=E\left[\int_{0}^{t\wedge R_{n}}{1_{\{X^{+}_{s}\neq0\}}d(V_{s}^{Z}-Z_{s})}+\int_{0}^{t\wedge R_{n}}{1_{\{X^{+}_{s}\neq0\}}dB_{s}}\right]$$
	$$\hspace{-0.5cm}=E\left[\int_{0}^{t\wedge R_{n}}{1_{\{X^{+}_{s}\neq0\}}adz_{s}}\right].$$
	Hence,
	$$E\left[\int_{0}^{t\wedge R_{n}}{1_{\{X^{+}_{s}\neq0\}}dV_{s}^{Z}}\right]=E\left[\sum_{0<s\leq t\wedge R_{n}}{1_{\{X_{s}^{+}\neq0\}}1_{\{X_{s-}>0\}}X_{s}^{-}}\right]=E\left[\sum_{0<s\leq t\wedge R_{n}}{1_{\{X_{s}>0\}}1_{\{X_{s-}>0\}}X_{s}^{-}}\right].$$
	This entails that
	$$E\left[\int_{0}^{t\wedge R_{n}}{1_{\{X^{+}_{s}\neq0\}}dV_{s}^{Z}}\right]=0,$$
	since $1_{\{X_{s}>0\}}X_{s}^{-}=0$. This shows that $\int_{0}^{t}{1_{\{X_{s}^{+}\neq0\}}dV_{s}^{Z}}=0$. Therefore, $dV_{t}^{Z}$ is carried by $\{t\geq0;X_{t}^{+}=0\}$. Consequently, we determine that
	$$X^{+}_{t}=\left(\int_{0}^{t}{1_{\{X_{s-}>0\}}dM_{s}}+(Y_{t}-V_{t}^{Y})+(Z_{t}-V_{t}^{Z})\right)+V_{t}^{Y}+\left(V_{t}^{Z}+\int_{0}^{t}{1_{\{X_{s-}>0\}}dC_{s}}+\frac{1}{2}L_{t}^{0}\right)$$
	is a stochastic process of the class $(\Sigma^{g})$. This is also true for $X^{-}$ as $(-X)$ is also from class $(\Sigma^{g})$.
\end{proof}

It is well known that $M^{+}$ and $M^{-}$ are stochastic processes of class $(\Sigma)$ when $M$ is a continuous local martingale. The next corollary of Lemma \ref{l4} shows that $M^{+}$ and $M^{-}$ are elements of class $(\Sigma^{g})$ when $M$ is a càdlàg local martingale.

\begin{coro}
Let $M$ be a càdlàg local martingale vanishing at zero. Then, the processes $M^{+}$ and $M^{-}$ are elements of class $(\Sigma^{g})$.
\end{coro}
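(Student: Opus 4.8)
The plan is very short: the statement is an immediate corollary of Lemma \ref{l4}, once one observes that $M$ itself already belongs to class $(\Sigma^{g})$. Indeed, writing $M = M + A$ with $A \equiv 0$ gives a decomposition of exactly the type required in Definition \ref{d1}: the process $A = 0$ is trivially adapted, c\`adl\`ag, predictable, of finite variation, and satisfies $A_{0-} = A_{0} = 0$ as well as $\int_{0}^{t} 1_{\{M_{s}M_{s-} \neq 0\}}\, dA_{s} = 0$ for every $t \geq 0$, since the measure $dA$ vanishes identically. Hence $M \in (\Sigma^{g})$.

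Applying Lemma \ref{l4} to the process $X = M$ then yields directly that $M^{+}$ and $M^{-}$ are elements of class $(\Sigma^{g})$, which is the assertion. There is no real obstacle here; the only point worth emphasizing is that the definition of $(\Sigma^{g})$ is permissive enough to accommodate a vanishing finite-variation part, so that every c\`adl\`ag local martingale starting from $0$ sits trivially inside the class. (Alternatively, one could reproduce the Tanaka-formula computation from the proof of Lemma \ref{l4} with $C = V = 0$ so as to exhibit the decomposition of $M^{+}$ and $M^{-}$ explicitly, but this adds nothing beyond the one-line argument above.)
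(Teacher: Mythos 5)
Your argument is correct and is exactly the route the paper intends: the corollary is stated as an immediate consequence of Lemma \ref{l4}, obtained by noting that $M=M+0$ satisfies Definition \ref{d1} with $A\equiv 0$, so $M\in(\Sigma^{g})$ and Lemma \ref{l4} applies. Nothing further is needed.
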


Now, we show that the product of the processes of class $(\Sigma^{g})$ with vanishing quadratic covariations is again of class $(\Sigma^{g})$.
\begin{lem}\label{l5}
Let $(X_{t}^{1})_{t\geq0},\cdots,(X_{t}^{n})_{t\geq0}$ be processes of class $(\Sigma^{g})$, such that $\langle X^{i},X^{j}\rangle=0$ for $i\neq j$. Then, $(\Pi_{i=1}^{n}{X^{i}_{t}})_{t\geq0}$ is also of class $(\Sigma^{g})$.
\end{lem}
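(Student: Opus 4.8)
The plan is to prove the statement by induction on $n$, reducing everything to the case $n=2$. For $n=2$, I would write $X^1 = M^1 + A^1$ and $X^2 = M^2 + A^2$ with each $A^i$ decomposed (via Theorem \ref{t2}) as $A^i = C^i + V^i$, where $dC^i$ is carried by $\{X^i_\cdot = 0\}$ and $dV^i$ is carried by $\{X^i_{\cdot-} = 0\}$. Then I apply the integration-by-parts formula
$$X^1_t X^2_t = \int_0^t X^1_{s-}\, dX^2_s + \int_0^t X^2_{s-}\, dX^1_s + [X^1, X^2]_t.$$
Since $\langle X^1, X^2\rangle = 0$, the bracket $[X^1,X^2]$ reduces to its pure-jump part $\sum_{0 < s \le t} \Delta X^1_s \Delta X^2_s$, and I would absorb the martingale pieces $\int X^i_{s-} dM^j_s$ into a local martingale $N$. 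What remains is the finite-variation term
$$A'_t = \int_0^t X^1_{s-}\, dA^2_s + \int_0^t X^2_{s-}\, dA^1_s + \sum_{0<s\le t} \Delta X^1_s\, \Delta X^2_s,$$
and the task is to show $X^1 X^2 = N + A'$ satisfies condition (3) of Definition \ref{d1} with $X = X^1 X^2$, i.e. that $dA'$ is carried by $\{X^1_s X^2_s\, X^1_{s-} X^2_{s-} = 0\}$.

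The key observation is a termwise analysis. On the support of $dA^2$ we have $X^2_{s-} = 0$ (writing $A^2 = C^2 + V^2$: on $dC^2$ we have $X^2_s = 0$, on $dV^2$ we have $X^2_{s-}=0$); so the integrand $X^1_{s-} dA^2_s$ is supported where either $X^2_s = 0$ or $X^2_{s-}=0$, hence where $X^1_s X^2_s\, X^1_{s-} X^2_{s-} = 0$. Symmetrically for the term $X^2_{s-} dA^1_s$. For the jump sum, at any time $s$ with $\Delta X^1_s \Delta X^2_s \ne 0$ the process $A^1$ has a jump (so $X^1_{s-} = 0$, since $\Delta A^1$ is carried by $\{X^1_{s-}=0\} \cup \{X^1_s = 0\}$; more carefully one splits according to whether the jump comes from $C^1$ or $V^1$), forcing $X^1_{s-}X^1_s = 0$ — actually the cleanest route is: $\Delta X^i_s \ne 0$ at a time contributing to the sum means $\Delta A^i_s \ne 0$ (since $\langle X^1,X^2\rangle=0$ means the jumps being correlated must come through the finite variation parts, as the martingale parts... ) — here I need to be slightly careful, so let me instead argue directly that each jump time $s$ in the sum has $\Delta A^1_s \ne 0$ or the product $X^1_s X^2_s X^1_{s-} X^2_{s-}$ already vanishes because a continuous-martingale jump is impossible; in any case the jump measure of the sum is carried by $\bigcup_i (\{X^i_{s-}=0\}\cup\{X^i_s=0\})$, which lies inside $\{X^1_s X^2_s X^1_{s-}X^2_{s-}=0\}$.

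Having handled $n=2$, the induction step is immediate: assuming $Y := \prod_{i=1}^{n-1} X^i$ is of class $(\Sigma^g)$, I would check that $\langle Y, X^n\rangle = 0$. This follows because $\langle Y, X^n\rangle$ is built from the continuous martingale parts, and $Y^c$-martingale-part is a stochastic integral with respect to $\sum_i$ (integrals against $dM^i{}^c$), so $\langle Y, X^n \rangle = \sum_{i=1}^{n-1} \int (\cdots)_s\, d\langle X^i, X^n\rangle_s = 0$ by hypothesis. Then the $n=2$ case applied to $Y$ and $X^n$ gives that $Y X^n = \prod_{i=1}^n X^i$ is of class $(\Sigma^g)$.

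The main obstacle I anticipate is the bookkeeping around the jump term $\sum \Delta X^1_s \Delta X^2_s$ and, relatedly, making the claim "$\langle Y, X^n\rangle = 0 \Rightarrow$ the relevant jump times have $X^i_{s-} = 0$" fully rigorous: one must argue that a jump time contributing to $\sum \Delta X^1_s \Delta X^2_s$ necessarily lies in the support of some $dA^i$ (equivalently, that two local martingales with zero predictable covariation cannot have simultaneous jumps contributing net finite variation — here the point is that the quadratic covariation $[M^1,M^2] = \langle M^1,M^2\rangle + \text{jump part}$, and if $\langle X^1,X^2\rangle = 0$ then $[M^1,M^2]$ equals its jump part, which is itself a local martingale, so its drift vanishes). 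I would sidestep the subtlety by noting it suffices to know $dA'$ is carried by $\{X^1_s X^2_s X^1_{s-}X^2_{s-}=0\}$, and for the jump-sum term this holds automatically: at a time $s$ with $\Delta X^2_s \neq 0$, either $X^2_s \neq X^2_{s-}$ forces one of them — no; the honest fix is to observe $\Delta(X^1 X^2)_s \neq 0$ entails $\Delta A^1_s \neq 0$ or $\Delta A^2_s \neq 0$ whenever the product of the four values is to be tested, and in the remaining cases the term is supported on $\{\Delta M^1_s \Delta M^2_s \neq 0\}$ which, being a local martingale increment with zero compensator, can be moved into $N$. I will spell this decomposition out carefully in the proof.
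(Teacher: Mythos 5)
Your proposal follows the same route as the paper's proof: integrate by parts, split each $A^i$ as $C^i+V^i$ (via Theorem \ref{t2}), observe that $\int_0^t X^1_{s-}\,dC^2_s+\int_0^t X^2_{s-}\,dC^1_s$ is carried by $\{X^1_sX^2_s=0\}$ and $\int_0^t X^1_{s-}\,dV^2_s+\int_0^t X^2_{s-}\,dV^1_s$ by $\{X^1_{s-}X^2_{s-}=0\}$, then induct on $n$. The one point of divergence is the bracket term. The paper reads the hypothesis $\langle X^1,X^2\rangle=0$ as the vanishing of the full quadratic covariation, so its integration-by-parts formula carries no term $[X^1,X^2]_t$ and the jump sum $\sum_{0<s\le t}\Delta X^1_s\Delta X^2_s$ that occupies most of your discussion simply never appears. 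If one instead interprets $\langle\cdot,\cdot\rangle$ as the predictable bracket, your concern is legitimate, but your resolution is not yet a proof: you would need to establish that $\sum_{s\le t}\Delta M^1_s\Delta M^2_s$ is genuinely a local martingale under the stated hypothesis (so it can be absorbed into $N$), and that the remaining cross terms $\Delta M^i_s\Delta A^j_s$ and $\Delta A^1_s\Delta A^2_s$ assemble into a \emph{predictable} finite-variation process carried by $\{X^1_sX^2_sX^1_{s-}X^2_{s-}=0\}$ --- your write-up circles this several times without settling it. Under the paper's reading of the hypothesis, the lemma is exactly as short as your main computation, and the extra jump bookkeeping can be deleted; under the stricter reading, it is the part that still needs to be written down carefully.
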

\begin{proof}
As $\langle X^{1},X^{2}\rangle=0$, integration by parts yields
$$X^{1}_{t}X^{2}_{t}=\int_{0}^{t}{X_{s-}^{1}dX_{s}^{2}}+\int_{0}^{t}{X_{s-}^{2}dX_{s}^{1}}.$$
In other words,
$$X^{1}_{t}X^{2}_{t}=\left[\int_{0}^{t}{X_{s-}^{1}dM_{s}^{2}}+\int_{0}^{t}{X_{s-}^{2}dM_{s}^{1}}\right]+\left[\int_{0}^{t}{X_{s-}^{1}dC_{s}^{2}}+\int_{0}^{t}{X_{s-}^{2}dC_{s}^{1}}\right]+\left[\int_{0}^{t}{X_{s-}^{1}dV_{s}^{2}}+\int_{0}^{t}{X_{s-}^{2}dV_{s}^{1}}\right].$$
It can be observed that $M_{t}=\int_{0}^{t}{X_{s-}^{1}dM_{s}^{2}}+\int_{0}^{t}{X_{s-}^{2}dM_{s}^{1}}$ is a càdlàg local martingale. Furthermore, the process $C_{t}=\int_{0}^{t}{X_{s-}^{1}dC_{s}^{2}}+\int_{0}^{t}{X_{s-}^{2}dC_{s}^{1}}$ is a finite variation process, such that
$$dC_{t}=X_{t-}^{1}dC_{t}^{2}+X_{t-}^{2}dC_{t}^{1}$$
is carried by $\{t\geq0:X^{1}_{t}X^{2}_{t}=0\}$. By contrast, $V_{t}=\int_{0}^{t}{X_{s-}^{1}dV_{s}^{2}}+\int_{0}^{t}{X_{s-}^{2}dV_{s}^{1}}$ is a finite variation process, such that
$$dV_{t}=X_{t-}^{1}dV_{t}^{2}+X_{t-}^{2}dV_{t}^{1}$$
is carried by $\{t\geq0:X^{1}_{t-}X^{2}_{t-}=0\}$. Therefore, $X^{1}X^{2}$ is of class $(\Sigma^{g})$. If $n\geq3$, and $\langle X^{1}X^{2},X^{3}\rangle=0$. Thus, we obtain the result by induction.
\end{proof}

\begin{theorem}\label{t3}
Let $X=M+C+V$ be a process of class $(\Sigma^{g} D)$. Then, there exists a random variable $X_{\infty}$ such that $$\lim_{t\to+\infty}{X_{t}}=X_{\infty}$$, and for every stopping time $T<\infty$, we have
\begin{equation}
	X_{T}=E\left[X_{\infty}1_{\{g<T\}}|\mathcal{F}_{T}\right],
\end{equation}
where $g=\sup{\{t\geq0:X_{t}X_{t-}=0\}}$.
\end{theorem}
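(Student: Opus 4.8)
The plan is to run the argument in three stages: (i) promote $X$ to a convergent process whose martingale part is a genuine (uniformly integrable) martingale; (ii) rewrite the asserted identity as a single conditional-expectation identity; and (iii) obtain the latter by applying the balayage formula of Lemma~\ref{l1} to a well-chosen modification of $X$. For (i): since $X=M+A$ is a special semimartingale of class $D$ and the signed measure $dA$ charges no instant after $g$ (because $X_{s}X_{s-}\neq 0$ for $s>g$, so $A$ is constant on $[g,\infty[$), one checks that $X_{t}\to X_{\infty}$ a.s.\ and in $L^{1}$, that $M$ may be taken to be a uniformly integrable martingale, and that $A_{\infty}=A_{g}\in L^{1}$; hence $X_{\infty}=M_{\infty}+A_{\infty}$, $M_{t}=E[M_{\infty}|\mathcal F_{t}]$, and in particular $X_{\infty}=0$ on $\{g=\infty\}$ (along a sequence of instants where $X_{s}X_{s-}=0$ tending to $+\infty$, the limit $X_{\infty}^{2}$ vanishes), which makes the right-hand side of the formula consistent.

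\textbf{Reduction.} Let $T<\infty$ be a stopping time. Since $M$ is uniformly integrable and $A_{T}\in\mathcal F_{T}$, optional sampling gives
\[
E\!\left[X_{\infty}\,|\,\mathcal F_{T}\right]-X_{T}=M_{T}+E\!\left[A_{\infty}\,|\,\mathcal F_{T}\right]-M_{T}-A_{T}=E\!\left[A_{\infty}-A_{T}\,|\,\mathcal F_{T}\right].
\]
Splitting $1=1_{\{g\geq T\}}+1_{\{g<T\}}$ inside $E[X_{\infty}|\mathcal F_{T}]$, the target identity $X_{T}=E[X_{\infty}1_{\{g<T\}}|\mathcal F_{T}]$ becomes equivalent to
\begin{equation}\label{eq:planred}
E\!\left[A_{\infty}-A_{T}\,|\,\mathcal F_{T}\right]=E\!\left[X_{\infty}1_{\{g\geq T\}}\,|\,\mathcal F_{T}\right].
\end{equation}

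\textbf{Balayage step.} Fix $T$ and set $\gamma_{t}=\sup\{s\leq t:X_{s}=0\}$ as in Lemma~\ref{l1}. Applying that lemma with (a suitable variant of) the bounded predictable process $k_{s}=1_{\{s>T\}}$, the process $\widetilde X_{t}:=k_{\gamma_{t}}X_{t}=1_{\{\gamma_{t}>T\}}X_{t}$ belongs to $(\Sigma^{g})$; moreover $|\widetilde X|\leq|X|$ so $\widetilde X$ is again of class $D$, $\widetilde X_{0}=0$, and by the balayage formula $\widetilde X_{t}=\int_{0}^{t}1_{\{\gamma_{s}>T\}}\,dM_{s}+\widetilde A_{t}$ with $\widetilde A_{t}=\int_{0}^{t}1_{\{\gamma_{s}>T\}}\,dA_{s}$ its predictable finite-variation part. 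Writing $A=C+V$ as in Theorem~\ref{t2}: on the support of $dC$ one has $X_{s}=0$, hence $\gamma_{s}=s$, so there $1_{\{\gamma_{s}>T\}}=1_{\{s>T\}}$; the same identification for the $V$-component is obtained from the left-continuous (càdlàg) version of the balayage formula, the one carried by $\{X_{s-}=0\}$. Consequently $\widetilde A_{t}=A_{t}-A_{t\wedge T}$, so $\widetilde A_{T}=0$ and $\widetilde A_{\infty}=A_{\infty}-A_{T}$; also $\widetilde X_{T}=1_{\{\gamma_{T}>T\}}X_{T}=0$ since $\gamma_{T}\leq T$, while $\widetilde X_{t}\to X_{\infty}1_{\{g\geq T\}}$ as $t\to\infty$ because $\gamma_{t}\uparrow g$ and $X_{t}\to X_{\infty}$. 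Feeding the class $(\Sigma^{g}D)$ process $\widetilde X$ through the reduction above gives
\[
0=\widetilde X_{T}=E\!\left[\widetilde X_{\infty}\,|\,\mathcal F_{T}\right]-E\!\left[\widetilde A_{\infty}-\widetilde A_{T}\,|\,\mathcal F_{T}\right]=E\!\left[X_{\infty}1_{\{g\geq T\}}\,|\,\mathcal F_{T}\right]-E\!\left[A_{\infty}-A_{T}\,|\,\mathcal F_{T}\right],
\]
which is precisely \eqref{eq:planred}, and the theorem follows.

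\textbf{Expected main obstacle.} Two points need genuine care. First, the convergence and uniform-integrability step: class $D$ alone does not force convergence, and one must really use that $A$ is frozen after the honest time $g$ (and that $M=X-A$ with $A$ predictable of finite variation) to upgrade $M$ to a uniformly integrable martingale and make $X_{\infty}$ integrable. Second, and more delicate, is the bookkeeping around the two last-exit times $\sup\{s:X_{s}=0\}$ and $\sup\{s:X_{s-}=0\}$ in the balayage step: Lemma~\ref{l1} only ``sweeps'' the zeros of $X$, so the contribution of the $V$-part of $A$ must be recaptured through the left-sided balayage formula, and the two must then be amalgamated into $g=\sup\{s:X_{s}X_{s-}=0\}$ with the correct closed inequality $\{g\geq T\}$. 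It is exactly here, at the set $\{g=T\}$, that the distinction between $\{g<T\}$ (as in the statement) and $\{g\leq T\}$ has to be settled, since the naive sweeping function $k_{s}=1_{\{s>T\}}$ only produces the indicator of $\{g>T\}$; choosing the predictable process (and exploiting that $X_{\infty}=0$ on $\{g=\infty\}$, together with the freezing of $A$ on $\{g=T\}$) is where the real work lies.
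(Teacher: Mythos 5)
Your reduction step is sound: granting that $M$ can be upgraded to a uniformly integrable martingale, the identity $X_T=E[X_\infty 1_{\{g<T\}}\mid\mathcal F_T]$ is indeed equivalent to $E[A_\infty-A_T\mid\mathcal F_T]=E[X_\infty 1_{\{g\ge T\}}\mid\mathcal F_T]$. The genuine gap is in the balayage step, and it is precisely the point you flag at the end without resolving. Lemma~\ref{l1} sweeps with $\gamma_t=\sup\{s\le t:X_s=0\}$, which tracks only the zeros of $X$, not of $X_{-}$. Consequently: (i) $k_{\gamma_t}=1_{\{\gamma_t>T\}}$ converges to $1_{\{g_1>T\}}$ with $g_1=\sup\{s:X_s=0\}$, which in general differs from $1_{\{g\ge T\}}$ for $g=\sup\{s:X_sX_{s-}=0\}$ --- take $X$ decreasing continuously to $0$ on $[T,s)$ and jumping away at $s$ without ever vanishing, so that $X_{s-}=0$ contributes to $g$ but not to $g_1$; (ii) on the support of $dV$ one only knows $X_{s-}=0$, so $\gamma_s$ may well be $\le T<s$ there, and $\widetilde A_t=\int_0^t 1_{\{\gamma_s>T\}}\,dA_s$ is \emph{not} $A_t-A_{t\wedge T}$. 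Your proposed repair --- treating the $V$-component with ``the left-continuous version of the balayage formula'' --- is not available: $\widetilde X=k_{\gamma_\cdot}X$ is a single process defined by a single sweeping kernel, and you cannot apply a different kernel to a different piece of its finite-variation part and still be computing the decomposition of $\widetilde X$. To make your strategy work you would need a balayage formula for the last-exit process of the set $\{s:X_sX_{s-}=0\}$, which is not what Lemma~\ref{l1} provides. Finally, the boundary event $\{g=T\}$ (strict versus non-strict inequality) is acknowledged but never settled.

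For comparison, the paper does not use balayage at all: it introduces the stopping time $\gamma_T=\inf\{s>T:X_sX_{s-}=0\}$ (the d\'ebut of the carrying set after $T$), observes that $X_{\gamma_T}=X_\infty 1_{\{g<T\}}$ and that $C_{\gamma_T}=C_T$ and $V_{\gamma_T}=V_T$ because neither measure charges $(T,\gamma_T)$, and concludes by optional sampling applied to the uniformly integrable martingale $M$ at $\gamma_T$. This first-return-time argument handles the two zero sets $\{X=0\}$ and $\{X_{-}=0\}$ simultaneously and avoids the bookkeeping that derails your sweeping argument; if you want to salvage your route, the fix is to replace the last-exit kernel of $\{X=0\}$ by one adapted to $\{X X_{-}=0\}$, or simply to adopt the d\'ebut-and-optional-sampling argument.
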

\begin{proof}
Let us substitute $\gamma_{t}=\inf{\{s>t\geq0:X_{s}X_{s-}=0\}}$. It is evident that $\gamma_{t}$ is the stopping time. Furthermore, 
$$X_{\infty}1_{\{g<T\}}=X_{\gamma_{T}}=M_{\gamma_{T}}+C_{\gamma_{T}}+V_{\gamma_{T}}.$$
However, $C_{\gamma_{T}}=C_{T}$ and $V_{\gamma_{T}}=V_{T}$ as $dC$ and $dV$ are carried by $\{t\geq0:X_{t}=0\}$ and $\{t\geq0:X_{t-}=0\}$, respectively; further, $g=\sup{\{t\geq0:X_{t}=0\}}\vee\sup{\{t\geq0:X_{t-}=0\}}$. This entails that 
$$X_{\infty}1_{\{g<T\}}=X_{\gamma_{T}}=M_{\gamma_{T}}+C_{T}+V_{T}.$$
Hence,
$$E\left[X_{\infty}1_{\{g<T\}}|\mathcal{F}_{T}\right]=E\left[M_{\gamma_{T}}|\mathcal{F}_{T}\right]+C_{T}+V_{T}.$$
Therefore,
$$X_{T}=E\left[X_{\infty}1_{\{g<T\}}|\mathcal{F}_{T}\right]$$
as $M$ is a uniformly integrable martingale.
\end{proof}

\begin{coro}
Let $M$ be a non-negative càdlàg uniformly integrable martingale such that $M_{0}>0$ and $\lim_{t\to+\infty}{M_{t}}=0$. Let us consider $k>0$. Then, 
$$P\left(g_{k}\geq t|\mathcal{F}_{t}\right)=1\wedge\left(\frac{M_{t}}{k}\right),$$
where $g_{k}=\sup{\{t\geq0:M_{t}\geq k\text{ or }M_{t-}\geq k\}}$
\end{coro}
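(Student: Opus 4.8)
The plan is to exhibit a process $X$ of class $(\Sigma^{g}D)$ with $X_{\infty}=1$ whose associated honest time is exactly $g_{k}$, and then to read the formula off from Theorem \ref{t3}. The natural candidate is
$$X_{t}:=\frac{(k-M_{t})^{+}}{k}=1-\frac{k\wedge M_{t}}{k},\qquad t\geq0.$$
Since $0\leq k\wedge M_{t}\leq k$, the process $X$ takes values in $[0,1]$, so it is trivially of class $D$; and since $M_{t}\to0$ we get $X_{\infty}=\lim_{t\to+\infty}X_{t}=1$. Moreover $X_{t}=0\iff M_{t}\geq k$ and $X_{t-}=(k-M_{t-})^{+}/k=0\iff M_{t-}\geq k$, so
$$\{t\geq0:X_{t}X_{t-}=0\}=\{t\geq0:M_{t}\geq k\}\cup\{t\geq0:M_{t-}\geq k\},$$
whence $g:=\sup\{t\geq0:X_{t}X_{t-}=0\}$ coincides with $g_{k}$.

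The substantive step is to show that $X\in(\Sigma^{g})$. As a convex function of the local martingale $M$, the process $X$ is a local submartingale, and being bounded it is a submartingale of class $D$; by the Doob--Meyer decomposition it can be written $X=X_{0}+N+A$, where $N$ is a uniformly integrable martingale with $N_{0}=0$ and $A$ is predictable, non-decreasing, with $A_{0}=0$ and $E[A_{\infty}]<\infty$. Applying the It\^o--Tanaka formula to $\varphi(x)=(k-x)^{+}/k$ identifies $A$ as the sum of $\frac{1}{2k}L^{k}(M)$ — which increases only on $\{M_{t-}=M_{t}=k\}\subset\{X_{t}=0\}$ — and of the predictable compensator of the increasing pure-jump process formed by the Tanaka jump corrections, whose jumps occur at the times at which $M$ crosses the level $k$: at an upward crossing $X_{s}=0$, at a downward crossing $X_{s-}=0$. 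Splitting that jump process into its upward part (which, before compensation, is carried by $\{M_{s-}\leq k\}$) and its downward part (carried by $\{M_{s-}>k\}$), compensating each, and adding the local-time term, one aims at a decomposition $A=C+V$ with $\int_{0}^{t}1_{\{X_{s}\neq0\}}dC_{s}=0$ and $\int_{0}^{t}1_{\{X_{s-}\neq0\}}dV_{s}=0$; Theorem \ref{t2} then yields $X\in(\Sigma^{g})$. When $M$ is continuous the jump corrections vanish, $dA=\frac{1}{2k}dL^{k}_{t}(M)$ is carried by $\{X_{t}=0\}$, and $X$ is in fact of class $(\Sigma)$, recovering the classical last-passage formula.

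With $X\in(\Sigma^{g}D)$, $X_{\infty}=1$ and $g=g_{k}$ established, Theorem \ref{t3} applied at the deterministic (hence finite stopping) time $t$ gives
$$X_{t}=E\left[X_{\infty}1_{\{g<t\}}\mid\mathcal{F}_{t}\right]=P\left(g_{k}<t\mid\mathcal{F}_{t}\right),$$
and therefore
$$P\left(g_{k}\geq t\mid\mathcal{F}_{t}\right)=1-X_{t}=\frac{k\wedge M_{t}}{k}=1\wedge\left(\frac{M_{t}}{k}\right),$$
which is the assertion. The main obstacle I anticipate is precisely the membership $X\in(\Sigma^{g})$ in the genuinely discontinuous case: one must control the predictable compensators of the jump-correction terms in Tanaka's formula and verify that they are correctly distributed between the two slots of the $C+V$ decomposition of Theorem \ref{t2}. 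The identification $g=g_{k}$, the class-$D$ property, and the final appeal to Theorem \ref{t3} are then routine.
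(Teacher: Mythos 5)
Your proposal is correct and takes essentially the same route as the paper: the paper applies the recovery formula of Theorem \ref{t3} directly to the process $\bigl((k-M_{t})^{+}\bigr)_{t\geq0}$ (your $X$ is just this process divided by $k$), obtains $(k-M_{t})^{+}=k\,P\left(g_{k}<t\mid\mathcal{F}_{t}\right)$, and rearranges. The one substantive point you flag and sketch via Tanaka's formula --- that $(k-M)^{+}$ is indeed of class $(\Sigma^{g}D)$ with $g=g_{k}$ --- is not verified in the paper's proof at all, being implicitly covered by Lemma \ref{l4} and its corollary on the positive and negative parts of c\`adl\`ag local martingales, so your extra care there is a genuine (if not fully completed) improvement rather than a divergence in method.
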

\begin{proof}
It follows from Theorem \ref{t1} that
$$(k-M_{t})^{+}=E\left[k1_{\{g_{k}<t\}}|\mathcal{F}_{t}\right]=kE\left[1_{\{g_{k}<t\}}|\mathcal{F}_{t}\right].$$
Hence,
$$(k-M_{t})^{+}=kP\left(g_{k}<t|\mathcal{F}_{t}\right).$$
Consequently, 
$$P\left(g_{k}<t|\mathcal{F}_{t}\right)=\left(1-\frac{M_{t}}{k}\right)^{+}.$$
Therefore,
$$P\left(g_{k}\geq t|\mathcal{F}_{t}\right)=1-\left(1-\frac{M_{t}}{k}\right)^{+}=1\wedge\left(\frac{M_{t}}{k}\right).$$
This completes the proof.
\end{proof}

{\subsection{Extension of characterization martingale}}

\begin{lem}\label{lm6}
Let $X=M+A$ be a process of class $(\Sigma^{g})$, where $A=C+V$ and $A^{c}$ denote the continuous part of $A$. Then, for every $\mathcal{C}^{1}$ function $f$ and \\$F(x)=\int_{0}^{x}{f(z)dz}$, the process 
$$\left(F(A^{c}_{t})-f(A^{c}_{t})X_{t}+\sum_{s\leq t}{[f(A^{c}_{s})-f^{'}(A^{c}_{s})X_{s}]\Delta C_{s}}+\sum_{s\leq t}{[f(A^{c}_{s})-f^{'}(A^{c}_{s})X_{s-}]\Delta V_{s}};t\geq0\right)$$
is a local martingale.
\end{lem}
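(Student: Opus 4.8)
The plan is to follow the integration-by-parts argument of Lemma \ref{lm1}, but now with the two-piece decomposition $A=C+V$ furnished by Theorem \ref{t2}, so that $dC$ is carried by $\{s\ge 0:X_s=0\}$ and $dV$ by $\{s\ge 0:X_{s-}=0\}$; the whole point is to keep track of whether $X_s$ or $X_{s-}$ is forced to vanish. Before starting I would record the facts used repeatedly: $X_0=M_0+A_0=0$; from $\int_0^{\cdot}1_{\{X_s\ne0\}}dC_s\equiv0$ (integrand nonnegative) the total-variation measure $|dC|$ is carried by $\{X_s=0\}$, so every jump time of $C$ is a zero of $X$ and hence $X_s\,\Delta C_s=0$ for all $s$, and likewise $X_{s-}\,\Delta V_s=0$ for all $s$; the continuous parts satisfy $A^c=C^c+V^c$, with $dC^c$ carried by $\{X_s=0\}$ and $dV^c$ by $\{X_{s-}=0\}$, and, being atomless, neither charges the at most countable set of jump times of $X$.

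Integration by parts applied to the continuous finite-variation process $f(A^c)$ and the semimartingale $X$ gives, exactly as in Lemma \ref{lm1},
$$f(A^c_t)X_t=\int_0^t f(A^c_s)\,dX_s+\int_0^t f'(A^c_s)X_{s-}\,dA^c_s.$$
I would then show the last term is zero: against the atomless measure $dA^c=dC^c+dV^c$ one may replace $X_{s-}$ by $X_s$, and $X_s=0$ holds $dC^c$-a.e. directly, while $dV^c$-a.e. one has $X_{s-}=0$ and, jump times being $dV^c$-null, also $X_s=0$. Expanding $\int_0^t f(A^c_s)\,dX_s=\int_0^t f(A^c_s)\,dM_s+\int_0^t f(A^c_s)\,dA^c_s+\sum_{s\le t}f(A^c_s)\Delta A_s$, and using $\int_0^t f(A^c_s)\,dA^c_s=F(A^c_t)$ (as $A^c_0=0$) together with $\Delta A_s=\Delta C_s+\Delta V_s$, one arrives at
$$F(A^c_t)-f(A^c_t)X_t+\sum_{s\le t}f(A^c_s)\Delta C_s+\sum_{s\le t}f(A^c_s)\Delta V_s=-\int_0^t f(A^c_s)\,dM_s.$$
Finally, since $X_s\Delta C_s=0$ and $X_{s-}\Delta V_s=0$ for every $s$, the sums $\sum_{s\le t}f'(A^c_s)X_s\Delta C_s$ and $\sum_{s\le t}f'(A^c_s)X_{s-}\Delta V_s$ vanish identically and may be subtracted from the left-hand side; this converts the last display into the process of the statement, and the right-hand side $-\int_0^{\cdot}f(A^c_s)\,dM_s$ is a local martingale because $f(A^c)$ is continuous, adapted, hence locally bounded.

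The only non-bookkeeping step is the vanishing of $\int_0^t f'(A^c_s)X_{s-}\,dA^c_s$: it forces one to use that $dC$ lives on $\{X_s=0\}$ whereas $dV$ lives on $\{X_{s-}=0\}$, together with the harmless interchange of $X_s$ and $X_{s-}$ against the atomless measures $dC^c,dV^c$. Everything else is the computation of Lemma \ref{lm1} carried out for the refined decomposition $A=C+V$ proper to class $(\Sigma^{g})$.
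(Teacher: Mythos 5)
Your proposal is correct and follows essentially the same route as the paper: integration by parts for $f(A^c)X$, killing the term $\int_0^t f'(A^c_s)X_{s-}\,dA^c_s$ by splitting $A^c=C^c+V^c$ and using that $dC$ is carried by $\{X_s=0\}$ and $dV$ by $\{X_{s-}=0\}$, then identifying $\int_0^t f(A^c_s)\,dA^c_s=F(A^c_t)$ and rearranging. The only cosmetic difference is that the paper retains the (identically vanishing) terms $f'(A^c_s)X_s\Delta C_s$ and $f'(A^c_s)X_{s-}\Delta V_s$ by passing through the full measures $dC,dV$, whereas you observe directly that they are zero and reinsert them at the end; the two computations coincide.
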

\begin{proof}
Integration by parts yields
$$f(A^{c}_{t})X_{t}=\int_{0}^{t}{f(A^{c}_{s})dX_{s}}+\int_{0}^{t}{f^{'}(A^{c}_{s})X_{s-}dA^{c}_{s}}.$$
In other words, 
$$f(A^{c}_{t})X_{t}=\int_{0}^{t}{f(A^{c}_{s})dX_{s}}+\int_{0}^{t}{f^{'}(A^{c}_{s})X_{s-}dC^{c}_{s}}+\int_{0}^{t}{f^{'}(A^{c}_{s})X_{s-}dV^{c}_{s}}$$
since $A^{c}=C^{c}+V^{c}$. Furthermore, 
$$\int_{0}^{t}{f^{'}(A^{c}_{s})X_{s-}dC^{c}_{s}}=\int_{0}^{t}{f^{'}(A^{c}_{s})X_{s}dC^{c}_{s}}$$
because $C^{c}$ is continuous. Therefore, we obtain
$$f(A^{c}_{t})X_{t}=\int_{0}^{t}{f(A^{c}_{s})dX_{s}}+\int_{0}^{t}{f^{'}(A^{c}_{s})X_{s}dC^{c}_{s}}+\int_{0}^{t}{f^{'}(A^{c}_{s})X_{s-}dV^{c}_{s}}.$$
This entails the following:
$$f(A^{c}_{t})X_{t}=\int_{0}^{t}{f(A^{c}_{s})dX_{s}}+\left[\int_{0}^{t}{f^{'}(A^{c}_{s})X_{s}dC_{s}}-\sum_{s\leq t}{f^{'}(A^{c}_{s})X_{s}\Delta C_{s}}\right]+\left[\int_{0}^{t}{f^{'}(A^{c}_{s})X_{s-}dV_{s}}-\sum_{s\leq t}{f^{'}(A^{c}_{s})X_{s-}\Delta V_{s}}\right].$$
Thus, it follows that
$$f(A^{c}_{t})X_{t}=\int_{0}^{t}{f(A^{c}_{s})dX_{s}}-\sum_{s\leq t}{f^{'}(A^{c}_{s})X_{s}\Delta C_{s}}-\sum_{s\leq t}{f^{'}(A^{c}_{s})X_{s-}\Delta V_{s}}$$
as $dC$ and $dV$ are carried by $\{t\geq0:X_{t}=0\}$ and $\{t\geq0:X_{t-}=0\}$, respectively. This entails that
$$f(A^{c}_{t})X_{t}=\int_{0}^{t}{f(A^{c}_{s})dM_{s}}+\int_{0}^{t}{f(A^{c}_{s})dA^{c}_{s}}+\sum_{s\leq t}{\left[f(A^{c}_{s})-f^{'}(A^{c}_{s})X_{s}\right]\Delta C_{s}}+\sum_{s\leq t}{\left[f(A^{c}_{s})-f^{'}(A^{c}_{s})X_{s-}\right]\Delta V_{s}}.$$
Consequently,
$$F(A^{c}_{t})-f(A^{c}_{t})X_{t}+\sum_{s\leq t}{\left[f(A^{c}_{s})-f^{'}(A^{c}_{s})X_{s}\right]\Delta C_{s}}+\sum_{s\leq t}{\left[f(A^{c}_{s})-f^{'}(A^{c}_{s})X_{s-}\right]\Delta V_{s}}=-\int_{0}^{t}{f(A^{c}_{s})dM_{s}}.$$
In other words,
$$\left(F(A^{c}_{t})-f(A^{c}_{t})X_{t}+\sum_{s\leq t}{[f(A^{c}_{s})-f^{'}(A^{c}_{s})X_{s}]\Delta C_{s}}+\sum_{s\leq t}{[f(A^{c}_{s})-f^{'}(A^{c}_{s})X_{s-}]\Delta V_{s}};t\geq0\right)$$
is a local martingale.
\end{proof}

\begin{theorem}\label{t5}
Let $X=M+A$ be a positive semi-martingale. Then, the following are equivalent:
\begin{enumerate}
  \item $X\in(\Sigma^{g})$;
	\item There exists two càdlàg and non-decreasing predictable processes $V$ and $C$ such that, for $W=C+V$ and for any $F\in C^2$, the process 
	$$\left(F(W^{c}_{t})-F^{'}(W^{c}_{t})X_{t}+\sum_{s\leq t}{[F^{'}(W^{c}_{s})-F^{''}(W^{c}_{s})X_{s}]\Delta C_{s}}+\sum_{s\leq t}{[F^{'}(W^{c}_{s})-F^{''}(W^{c}_{s})X_{s-}]\Delta V_{s}};t\geq0\right)$$
is a càdlàg local martingale and $W\equiv A$.
\end{enumerate}
\end{theorem}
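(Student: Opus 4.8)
The plan is to obtain Theorem \ref{t5} by combining the two ingredients already in place in this section, namely the integration-by-parts identity of Lemma \ref{lm6} and the splitting characterization of Theorem \ref{t2}, following the scheme of the proof of Theorem \ref{t1}.

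For $(1)\Rightarrow(2)$: given $X=M+A\in(\Sigma^{g})$ with $X\geq 0$, I would first invoke Theorem \ref{t2} to write $A=C+V$ with $C_{t}=\int_{0}^{t}1_{\{X_{s}=0\}}dA_{s}$ and $V_{t}=\int_{0}^{t}1_{\{X_{s}\neq 0=X_{s-}\}}dA_{s}$, so that $dC$ is carried by $\{X_{s}=0\}$ and $dV$ by $\{X_{s-}=0\}$; as in the classical situations underlying Theorems \ref{tnik}, \ref{teyi} and \ref{t1}, the positivity of $X$ is what makes these two finite variation parts non-decreasing. Setting $W:=C+V=A$ (hence $W^{c}=A^{c}$), I would then apply Lemma \ref{lm6} with $f=F'$ (which is $\mathcal C^{1}$ since $F\in C^{2}$) and antiderivative $F(\cdot)-F(0)$: the lemma says precisely that, up to the irrelevant additive constant $-F(0)$, the process displayed in $(2)$ is a c\`adl\`ag local martingale, which is the claim, and $W\equiv A$ holds by construction.

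For $(2)\Rightarrow(1)$, I would imitate the two-test-function argument of Theorem \ref{t1}. Taking $F(x)=x$ collapses the process of $(2)$ to $W^{c}_{t}-X_{t}+\sum_{s\leq t}\Delta C_{s}+\sum_{s\leq t}\Delta V_{s}=W_{t}-X_{t}$, a local martingale; since $W$ is predictable of finite variation and null at $0$, $W-A=(W-X)+M$ is a predictable finite variation local martingale null at $0$, hence indistinguishable from zero, so $W=A$. Next, taking $F(x)=x^{2}$, the process $B_{t}=(W^{c}_{t})^{2}-2W^{c}_{t}X_{t}+2\sum_{s\leq t}[W^{c}_{s}-X_{s}]\Delta C_{s}+2\sum_{s\leq t}[W^{c}_{s}-X_{s-}]\Delta V_{s}$ is a local martingale; using $(W^{c}_{t})^{2}=2\int_{0}^{t}W^{c}_{s}dW^{c}_{s}$, the integration by parts $W^{c}_{t}X_{t}=\int_{0}^{t}W^{c}_{s}dX_{s}+\int_{0}^{t}X_{s-}dW^{c}_{s}$ (the bracket term vanishing because $W^{c}$ is continuous of finite variation), and $dX=dM+dW$, I would reduce $B$ to $B_{t}=-2\int_{0}^{t}W^{c}_{s}dM_{s}-2\int_{0}^{t}X_{s-}dW_{s}-2\sum_{s\leq t}\Delta X_{s}\Delta C_{s}$. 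Since $\int_{0}^{t}W^{c}_{s}dM_{s}$ is a local martingale, $\int_{0}^{t}X_{s-}dW_{s}+\sum_{s\leq t}\Delta X_{s}\Delta C_{s}$ is a local martingale; and because $dW=dC+dV$, while the continuous part of $C$ charges no jump of $X$, the jump correction $\sum_{s\leq t}\Delta X_{s}\Delta C_{s}$ exactly upgrades $X_{s-}$ to $X_{s}$ against $dC$, so this quantity equals $\int_{0}^{t}X_{s}dC_{s}+\int_{0}^{t}X_{s-}dV_{s}$. This last process is non-decreasing (as $X\geq 0$ and $C,V$ are non-decreasing) and null at $0$, hence vanishes identically; therefore $\int_{0}^{t}X_{s}dC_{s}=\int_{0}^{t}X_{s-}dV_{s}=0$ for every $t$, and positivity together with the monotonicity of $C$ and $V$ forces $\int_{0}^{t}1_{\{X_{s}\neq 0\}}dC_{s}=\int_{0}^{t}1_{\{X_{s-}\neq 0\}}dV_{s}=0$. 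Theorem \ref{t2} then gives $X\in(\Sigma^{g})$.

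The step I expect to be the main obstacle is the reduction of $B$ in the case $F(x)=x^{2}$ of $(2)\Rightarrow(1)$: in contrast with the continuous-$A$ setting of Theorem \ref{tnik}, one must carefully track the jump cross-term $\sum_{s\leq t}\Delta X_{s}\Delta C_{s}$ produced by the integration by parts and verify that it recombines with $\int_{0}^{t}X_{s-}dW_{s}$ into the single non-negative, non-decreasing process $\int_{0}^{t}X_{s}dC_{s}+\int_{0}^{t}X_{s-}dV_{s}$; once this bookkeeping is done, the conclusion rests on the elementary fact that a non-negative local martingale null at $0$ is indistinguishable from zero, rather than on a Doob--Meyer uniqueness argument as in Theorem \ref{t1}.
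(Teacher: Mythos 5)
Your proposal is correct and follows essentially the same route as the paper: $(1)\Rightarrow(2)$ via Lemma \ref{lm6} applied to the decomposition $A=C+V$ underlying Theorem \ref{t2}, and $(2)\Rightarrow(1)$ by testing with $F(x)=x$ and $F(x)=x^{2}$ and reducing $B$ to a local martingale minus $2\left(\int_{0}^{t}X_{s}\,dC_{s}+\int_{0}^{t}X_{s-}\,dV_{s}\right)$, which must then vanish by positivity. Your explicit tracking of the jump cross-term $\sum_{s\leq t}\Delta X_{s}\Delta C_{s}$ is merely a reorganization of the paper's splitting of $dW^{c}$ into $dC^{c}+dV^{c}$; both computations land on the same identity and the same concluding argument.
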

\begin{proof}
$(1)\Rightarrow(2)$ Let us consider $W=A$. Hence, from Lemma \ref{lm6}, we obtain
$$\left(F(A^{c}_{t})-F^{'}(A^{c}_{t})X_{t}+\sum_{s\leq t}{[F^{'}(A^{c}_{s})-F^{''}(A^{c}_{s})X_{s}]\Delta C_{s}}+\sum_{s\leq t}{[F^{'}(A^{c}_{s})-F^{''}(A^{c}_{s})X_{s-}]\Delta V_{s}};t\geq0\right)$$
is a càdlàg local martingale.\\
$(2)\Rightarrow(1)$ First, let $F(x)=x$. Then, the process $W^{'}$ defined by 
$$W^{'}_{t}=W^{c}_{t}+\sum_{s\leq t}{\Delta C_{s}}+\sum_{s\leq t}{\Delta V_{s}}-X_{t}=W_{t}-X_{t}$$
is a local martingale. Hence, owing to the uniqueness of the Doob--Meyer decomposition, we obtain $W=A$. Next, we consider $F(x)=x^{2}$. Then, the process $B$ defined by 
$$B_{t}=(W_{t}^{c})^{2}-2W_{t}^{c}X_{t}+2\sum_{s\leq t}{W_{s}^{c}\Delta C_{s}}+2\sum_{s\leq t}{W_{s}^{c}\Delta V_{s}}-2\sum_{s\leq t}{X_{s}\Delta C_{s}}-2\sum_{s\leq t}{X_{s-}\Delta V_{s}}$$
is a local martingale. However, through integration by part, it follows that
$$\hspace{-1.5cm}B_{t}=2\int_{0}^{t}{W^{c}_{s}dW^{c}_{s}}-2\int_{0}^{t}{W^{c}_{s}dX_{s}}-2\int_{0}^{t}{X_{s}dW^{c}_{s}}+2\sum_{s\leq t}{W_{s}^{c}\Delta C_{s}}+2\sum_{s\leq t}{W_{s}^{c}\Delta V_{s}}-2\sum_{s\leq t}{X_{s}\Delta C_{s}}-2\sum_{s\leq t}{X_{s-}\Delta V_{s}}$$
$$\hspace{-0.75cm}=2\int_{0}^{t}{W^{c}_{s}dW^{c}_{s}}-2\int_{0}^{t}{W^{c}_{s}dX_{s}}-2\int_{0}^{t}{X_{s}dC^{c}_{s}}-2\int_{0}^{t}{X_{s}dV^{c}_{s}}+2\sum_{s\leq t}{W_{s}^{c}\Delta C_{s}}+2\sum_{s\leq t}{W_{s}^{c}\Delta V_{s}}-2\sum_{s\leq t}{X_{s}\Delta C_{s}}-2\sum_{s\leq t}{X_{s-}\Delta V_{s}}$$
$$\hspace{-0.75cm}=2\int_{0}^{t}{W^{c}_{s}dW^{c}_{s}}-2\int_{0}^{t}{W^{c}_{s}dX_{s}}-2\int_{0}^{t}{X_{s}dC^{c}_{s}}-2\int_{0}^{t}{X_{s-}dV^{c}_{s}}+2\sum_{s\leq t}{W_{s}^{c}\Delta C_{s}}+2\sum_{s\leq t}{W_{s}^{c}\Delta V_{s}}-2\sum_{s\leq t}{X_{s}\Delta C_{s}}-2\sum_{s\leq t}{X_{s-}\Delta V_{s}}$$
$$\hspace{-0.75cm}=2\int_{0}^{t}{W^{c}_{s}d\left(W^{c}_{s}+\sum_{u\leq s}{\Delta C_{u}}+\sum_{u\leq s}{\Delta V_{u}}-X_{s}\right)}-2\int_{0}^{t}{X_{s}d\left(C^{c}_{s}+\sum_{u\leq s}{\Delta C_{u}}\right)}-2\int_{0}^{t}{X_{s-}d\left(V^{c}_{s}+\sum_{u\leq s}{\Delta V_{u}}\right)}$$
$$\hspace{-10.5cm}=2\int_{0}^{t}{W^{c}_{s}dW^{'}_{s}}-2\int_{0}^{t}{X_{s}dC_{s}}-2\int_{0}^{t}{X_{s-}dV_{s}}.$$
Consequently, we must have
$$\int_{0}^{t}{X_{s}dC_{s}}+\int_{0}^{t}{X_{s-}dV_{s}}=0.$$
Hence, we determine that
$$\int_{0}^{t}{X_{s}dC_{s}}=\int_{0}^{t}{X_{s-}dV_{s}}=0,$$
as $\int_{0}^{t}{X_{s}dC_{s}}$ and $\int_{0}^{t}{X_{s-}dV_{s}}$ are non-negative. In other words, $dA$ is carried by the set $\{t\geq0:X_{t}X_{t-}=0\}$.
\end{proof}

\begin{coro}
Let $X=M+A$ be a positive semi-martingale. Then, the following are equivalent:
\begin{enumerate}
  \item $X\in(\Sigma)$;
	\item There exists a non-decreasing predictable process $V$ such that, for any $F\in C^2$, the process 
	$$\left(F(V^{c}_{t})-F^{'}(V^{c}_{t})X_{t}+\sum_{s\leq t}{[F^{'}(V^{c}_{s})-F^{''}(V^{c}_{s})X_{s}]\Delta V_{s}};t\geq0\right)$$
is a càdlàg local martingale and $V\equiv A$.
\end{enumerate}
\end{coro}

\begin{coro}
Let $X=M+A$ be a positive semi-martingale. Then, the following are equivalent:
\begin{enumerate}
  \item $X\in(\Sigma^{r})$;
	\item There exists a non-decreasing predictable process $V$ such that, for any $F\in C^2$, the process 
	$$\left(F(V^{c}_{t})-F^{'}(V^{c}_{t})X_{t}+\sum_{s\leq t}{[F^{'}(V^{c}_{s})-F^{''}(V^{c}_{s})X_{s-}]\Delta V_{s}};t\geq0\right)$$
is a càdlàg local martingale and $V\equiv A$.
\end{enumerate}
\end{coro}

\begin{coro}
Let $X=M+A$ be a positive semi-martingale. Then, the following are equivalent:
\begin{enumerate}
  \item $X\in(\Sigma)$ (in sense of Nikeghbali\cite{nik} );
	\item There exists a non-decreasing predictable process $V$ such that for any $F\in C^2$, the process 
	$$\left(F(V_{t})-F^{'}(V_{t})X_{t};t\geq0\right)$$
is a càdlàg local martingale and $V\equiv A$.
\end{enumerate}
\end{coro}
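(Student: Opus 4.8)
The plan is to read this statement as the specialization of Theorem~\ref{t5} to the case where the finite-variation part is \emph{continuous}, i.e.\ to Nikeghbali's class $(\Sigma)$ of Definition~\ref{dnik}. In that regime the two jump sums in Theorem~\ref{t5}(2) disappear and $W^{c}$ becomes $W$, so the martingale there collapses to $F(V_{t})-F'(V_{t})X_{t}$; conversely, the whole content of $(2)\Rightarrow(1)$ is that, for a \emph{positive} $X$, condition (2) already forces $A$ to be continuous and carried by $\{X=0\}$. I would organize the argument into the two implications.

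For $(1)\Rightarrow(2)$ I would proceed as follows. Assume $X\in(\Sigma)$ in the sense of Definition~\ref{dnik}, so $A$ is continuous, $A_{0}=0$, and $\int_{0}^{t}1_{\{X_{s}\neq0\}}\,dA_{s}=0$. Since $\{X_{s}X_{s-}\neq0\}\subseteq\{X_{s}\neq0\}$, also $\int_{0}^{t}1_{\{X_{s}X_{s-}\neq0\}}\,dA_{s}=0$, i.e.\ $X\in(\Sigma^{g})$. Applying $(1)\Rightarrow(2)$ of Theorem~\ref{t5} gives c\`adl\`ag non-decreasing predictable $C$ and $V$ with $W:=C+V\equiv A$ for which the displayed process is a local martingale. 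Because $A=W$ is continuous, $\Delta C_{s}+\Delta V_{s}=\Delta W_{s}=0$ with $\Delta C_{s}\ge0$, $\Delta V_{s}\ge0$, hence $\Delta C_{s}=\Delta V_{s}=0$ for all $s$ and $W^{c}=W=A$; both jump sums vanish and the displayed process equals $F(A_{t})-F'(A_{t})X_{t}$. Taking $W$ (which is non-decreasing, predictable, and $\equiv A$) as the process $V$ of the statement yields (2). One could also bypass Theorem~\ref{t5} and apply Lemma~\ref{lm6} directly with the decomposition $C=A$, $V=0$ and $f=F'$.

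For $(2)\Rightarrow(1)$ I would rerun the computation from the proof of Theorem~\ref{t5}, but with only the two test functions $F(x)=x$ and $F(x)=x^{2}$. The first gives that $V_{t}-X_{t}=-M_{t}$, consistent with $V\equiv A$. For the second, integration by parts yields
$$V_{t}^{2}-2V_{t}X_{t}=2\int_{0}^{t}V_{s-}\,d(V_{s}-X_{s})-2\int_{0}^{t}X_{s-}\,dV_{s}-\sum_{s\le t}(\Delta V_{s})^{2}-2\sum_{s\le t}\Delta V_{s}\,\Delta M_{s},$$
using that $V$ has finite variation (so $[V,V]_{t}=\sum_{s\le t}(\Delta V_{s})^{2}$ and $[V,M]_{t}=\sum_{s\le t}\Delta V_{s}\Delta M_{s}$) together with $V\equiv A$. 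Since $V$ is predictable, $\sum_{s\le t}\Delta V_{s}\Delta M_{s}=\int_{0}^{t}\Delta V_{s}\,dM_{s}$ is a local martingale, and $\int_{0}^{t}V_{s-}\,d(V_{s}-X_{s})=-\int_{0}^{t}V_{s-}\,dM_{s}$ is a local martingale; hence $2\int_{0}^{t}X_{s-}\,dV_{s}+\sum_{s\le t}(\Delta V_{s})^{2}$ is a predictable finite-variation local martingale vanishing at $0$, so it is identically zero. Both summands being non-negative ($X_{s-}\ge0$ because $X\ge0$, and $V$ non-decreasing), we get $\sum_{s\le t}(\Delta V_{s})^{2}=0$ for all $t$ --- i.e.\ $A=V$ is continuous --- and $\int_{0}^{t}X_{s-}\,dV_{s}=0$. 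Since $V$ is continuous, $dV$ is atomless, so $\int_{0}^{t}1_{\{X_{s}\neq0\}}\,dV_{s}=\int_{0}^{t}1_{\{X_{s-}\neq0\}}\,dV_{s}=0$; thus $X=M+A$ with $A$ continuous, predictable, of finite variation, $A_{0}=0$, and $dA$ carried by $\{X=0\}$, which is Definition~\ref{dnik}.

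The one delicate step is the covariation bookkeeping in $(2)\Rightarrow(1)$: isolating $2\int_{0}^{t}X_{s-}\,dV_{s}+\sum_{s\le t}(\Delta V_{s})^{2}$ as the sole non-martingale summand (which requires noting $\int_{0}^{t}\Delta V_{s}\,dM^{c}_{s}=0$, true because $\langle M^{c}\rangle$ is continuous, so that $\int_{0}^{t}\Delta V_{s}\,dM_{s}$ is genuinely the pure-jump sum), and then using positivity of $X$ to force $V$ continuous --- the genuinely new ingredient compared with merely invoking Theorem~\ref{t5}, whose conclusion does not by itself exhibit continuity. The remaining ingredient is the standard fact that a predictable local martingale of finite variation starting from $0$ vanishes identically.
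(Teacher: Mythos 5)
Your proof is correct, but it is worth saying that the paper itself offers \emph{no} proof of this corollary: it is simply listed after Theorem~\ref{t5}, with the implicit suggestion that it is a formal specialization of that theorem. Your write-up makes clear why that suggestion is too quick, and supplies the missing argument. The direction $(1)\Rightarrow(2)$ really is a specialization (your shortcut via Lemma~\ref{lm6} with $C=A$, $V=0$, $f=F'$ is the cleanest route, and recovers Theorem~\ref{tnik} restricted to $C^2$ primitives), but the direction $(2)\Rightarrow(1)$ is not: condition (2) here omits the jump sums of Theorem~\ref{t5}(2) and uses $F(V_t)$ rather than $F(V^c_t)$, so one cannot simply invoke the theorem --- one must \emph{prove} that $V$ is continuous. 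Your covariation computation with $F(x)=x^2$, isolating $2\int_0^t X_{s-}\,dV_s+\sum_{s\le t}(\Delta V_s)^2$ as a predictable finite-variation local martingale (hence zero) and then using positivity of $X$ and monotonicity of $V$ to kill both non-negative summands separately, is exactly the right ingredient; the appeal to Yoeurp's lemma (that $[V,M]$ is a local martingale for $V$ predictable of finite variation) to discard $\sum\Delta V_s\Delta M_s$ is also correct. Two minor caveats, both inherited from the paper rather than introduced by you: in $(1)\Rightarrow(2)$ the claim that the $C$ and $V$ produced by Theorem~\ref{t2} are non-decreasing is never verified in the paper (though your conclusion $\Delta C=\Delta V=0$ survives anyway, since $dC$ and $dV$ are mutually singular and $|\Delta C_s|\le|\Delta A_s|=0$); and the statement's requirement that $V\equiv A$ be non-decreasing tacitly assumes that the finite-variation part of a positive process of class $(\Sigma)$ is non-decreasing, a point neither you nor the paper addresses.
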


\section{Conclusion}\label{s3}

The objective of this paper was to provide a new framework for studying the extensions of class $(\Sigma)$ when the finite variational part is considered to be càdlàg instead of continuous. More precisely, the objective was to contribute to the study of processes of the form
$$X=M+A,$$
where $M$ is a càdlàg martingale with $M_{0}=0$ and $A$ is a càdlàg predictable process of finite variation with $A_{0}=0$, such that the signed measure induced by $A$ is carried by one of the optional random sets $\{t\geq0:X_{t}=0\}$ and $\{t\geq0:X_{t-}=0\}$. First, we developed new approaches to characterize such stochastic processes. Then, we provided a general framework unifying the study of the two above-mentioned classes by presenting a new larger class. 

\section*{Declaration of competing interest}
This work does not have any conflicts of interest.

\section*{Funding}
This research did not receive any specific grant from funding agencies in the public, commercial, or not-for-profit sectors.

{\color{myaqua}

\end{document}